\setlist[itemize]{noitemsep}
\newcommand{\rating}[1]{%
  \ifnum#1>0$\bullet$\else$\circ$\fi%
  \ifnum#1>1$\bullet$\else$\circ$\fi%
  \ifnum#1>2$\bullet$\else$\circ$\fi%
  \ifnum#1>3$\bullet$\else$\circ$\fi%
  \ifnum#1>4$\bullet$\else$\circ$\fi%
}
\definecolor{darkblue}{rgb}{0,0,0.7}
\crefname{algocf}{Algorithm}{Algorithms}
\crefname{equation}{Equation}{Equations}
\crefname{figure}{Figure}{Figures}
\crefname{enumi}{Item}{Items}
\newtheorem{theorem}{Theorem}[section]
\newtheorem{lemma}[theorem]{Lemma}
\newtheorem{definition}[theorem]{Definition}
\newtheorem{corollary}[theorem]{Corollary}
\newtheorem{proposition}[theorem]{Proposition}
\theoremstyle{definition}
\newtheorem{remark}[theorem]{Remark}
\newcounter{questioncounter}
\newtheorem*{sidconj}{Sidorenko's Conjecture}
\newtheorem*{forcconj}{Forcing Conjecture}
\newcommand{\sdot}{\,\cdot\,}
\newcommand{\PP}{\mathbb{P}}
\newcommand{\mA}{\mathcal{A}}
\newcommand{\mK}{\mathcal{K}}
\newcommand{\mG}{\mathcal{G}}
\newcommand{\RR}{\mathbb{R}}
\newcommand{\NN}{\mathbb{N}}
\newcommand{\g}[1]{\mG_{#1}} 
\newcommand{\gx}[3]{\g{#1}^{{#2},{#3}}} 
\newcommand{\gxx}[2]{\g{#1}^{{#2}}} 
\newcommand{\gc}{\mG_\simeq} 
\newcommand{\gcx}[2]{\gc^{{#1},{#2}}} 
\newcommand{\gcxx}[1]{\gc^{{#1}}} 
\newcommand{\gcvs}{\mathbb{R}[\gc]} 
\newcommand{\gcvsx}[2]{\mathbb{R}[\gcx{#1}{#2}]} 
\newcommand{\ga}{\mA}
\newcommand{\gax}[2]{\ga^{{#1},{#2}}}
\newcommand{\gaxx}[1]{\ga^{{#1}}}
\DeclareMathOperator{\inj}{inj}
\DeclareMathOperator{\id}{id}
\DeclareMathOperator{\const}{const}
\DeclareMathOperator{\nind}{ni}
\DeclareMathOperator{\ind}{ind}
\DeclareMathOperator{\sub}{sub}
\newcommand\@rest[3]{%
        \begingroup
                #2 %
                \sbox0{$\m@th#2\left.\kern-\nulldelimiterspace\vphantom{#1}\littletaller\right|$}%
                \sbox2{$\m@th#2\upharpoonright$}%
                \dimen0=\ht0 %
                \advance\dimen0 by -\ht2 %
                \begingroup
                        #2#1%
                \endgroup
                \mathclose{%
                \kern\nulldelimiterspace\mathclap{\usebox0}\mathclap{\raisebox{\dimen0}{\usebox2}}\kern\nulldelimiterspace}_{#3}
        \endgroup
}
\newcommand\rest[2]{%
        \mathpalette{\@rest{#1}}{#2}%
}
\newcommand\restrict[2]{
   \left.\kern-\nulldelimiterspace 
   #1
   \littletaller 
   \right|_{#2}%
   }
 \newcommand{\littletaller}{\mathchoice{\vphantom{\big|}}{}{}{}}
\newcommand{\defemph}[1]{\emph{#1}}
\title{Forcing Graphs to be Forcing}
\author[1,3]{Aldo Kiem\thanks{Supported by the Deutsche Forschungsgemeinschaft (DFG, German Research Foundation) under Germany's Excellence Strategy – The Berlin Mathematics Research Center MATH+ (EXC-2046/1, project ID: 390685689).}}
\author[1,2]{Olaf Parczyk}
\author[1,3]{Christoph Spiegel}
\date{}
\affil[1]{\small Zuse Institute Berlin, Department AIS2T, \emph{lastname}@zib.de}
\affil[2]{\small Freie Universit\"at Berlin, Institute of Mathematics}
\affil[3]{\small Technische Universit\"at Berlin, Institute of Mathematics}
\begin{document}

\maketitle

\begin{abstract}
    Sidorenko's conjecture states that the number of copies of any given bipartite graph in another graph of given density is asymptotically minimized by a random graph.
    The forcing conjecture further strengthens this, claiming that any minimizer in fact needs to be quasi-random.
    Here we extend the family of bipartite graphs for which the forcing conjecture is known to hold to include balanced blow-ups of Sidorenko graphs and subdivisions of Sidorenko graphs by a forcing graph.
    This partially generalizes results by Conlon et al.~\cite{conlon2018some} and Conlon and Lee \cite{conlon2018sidorenko}.
    We also show that the box product of a Sidorenko graph with an edge is forcing, partially generalizing results of Kim, Lee, and Lee~\cite{kim2016two} and, in particular, showing that cubes are forcing.
    We achieve these results through algebraic arguments building on Razborov's flag algebra framework~\cite{Razborov_2007}.
    This approach additionally allows us to construct Sidorenko hypergraphs from known 2-uniform Sidorenko graphs and to study forcing pairs.
\end{abstract}

\section{Introduction}

Sidorenko's conjecture and the forcing conjecture are two fundamental open problems in extremal graph theory.
For graphs $G$ and $H$, let $t(G, H)$ denote the probability that a random map $\phi: V_G \rightarrow V_H$ is a graph homomorphism from $G$ to $H$, meaning that for all edges ${u,v} \in E_G$, we have $\{\phi(u),\phi(v)\} \in E_H$.
Sidorenko~\cite{Sidorenko_1993} and previously already independently Erd\H{o}s and Simonovits~\cite{simonovits1984extremal} formulated the following conjecture.
\begin{sidconj}
    For every bipartite graph $G$ and every graph $H$, we have
    \begin{equation} \label{eq:Sidorenko}
        t(G,H) \ge t(K_2,H)^{e_G} .
    \end{equation}
\end{sidconj}

As the right hand side of \cref{eq:Sidorenko} is the expected number of copies of $G$ in a binomial random graph of density $t(K_2, H)$, the conjecture in fact states that $t(G, H)$ is minimized by that random graph.
Note that if $G$ is not bipartite, i.e., it contains an odd cycle, then any bipartite graph $H$ with positive density shows that \cref{eq:Sidorenko} cannot hold.

This conjecture has been verified for large families of graphs and it is customary to call a graph \defemph{Sidorenko} if \cref{eq:Sidorenko} holds for any $H$.
Sidorenko~\cite{Sidorenko_1993} already provided evidence for his conjecture by showing that trees, even cycles, and complete bipartite graphs are Sidorenko.
Other families of graphs known to be Sidorenko include cubes~\cite{hatami2010graph}, bipartite graphs with one vertex complete to the other side~\cite{conlon2010approximate}, the cartesian product $T \square H$ of a tree $T$ with a Sidorenko graph $H$~\cite{kim2016two}, and the cartesian product $C_{2k} \square H$ of an even cycle with a Sidorenko graph $H$~\cite{conlon2018some}.
Certain tree like graphs were also shown to be Sidorenko in~\cite{kim2016two,li2011logarithimic}, which was superseded by Conlon, Kim, Lee, and Lee \cite{conlon2018some}, who showed that strongly tree-decomposable graphs are Sidorenko.
Lee and Szegedy~\cite{li2011logarithimic} and Szegedy~\cite{szegedy2014relative,szegedy2014information} developed a recursive procedure to construct Sidorenko graphs from smaller structures which covered all results known until that point, but without any known simple non-recursive description.
There are two more recent results that break out of this pattern:
Conlon and Lee~\cite{conlon2018sidorenko} proved that a specific degree condition is sufficient for a graph to be Sidorenko, which implies that a sufficiently large blow-up of any bipartite graph $H$ is Sidorenko, and
Im, Li, and Liu~\cite{im2024sidorenko} showed that specific subdivisions, possibly starting with an arbitrary graph, are Sidorenko.
Our results are likewise concerned with blow-ups and subdivisions. Before stating them, let us first discuss the forcing conjecture.

A sequence of graphs $(H_n)_{n \in \NN}$ of increasing order is \defemph{$p$-quasi-random} if for every graph $G$ we have $t(G, H_n)  = \big( 1 + o(1) \big) \, p^{e_G}$.
The concept of quasi-random graphs was established by Chung, Graham, and Wilson~\cite{chung1989quasi}, who also stated several equivalent definitions. Most notably, $(H_n)_{n \in \NN}$ is $p$-quasi-random if and only if $t(K_2, H_n) = \big( 1 + o(1) \big) \, p$ and $t(C_4, H_n) = \big( 1 + o(1) \big) \, p^{4}$.
Motivated by this formulation, Skokan and Thoma~\cite{skokan2004bipartite} asked which other graphs can replace $C_4$, naming any such graph forcing. More precisely, $G$ is \defemph{forcing} if any sequence of graphs $(H_n)_{n \in \NN}$ with $t(K_2, H_n)=\big( 1 + o(1) \big) \, p$ and $t(G, H_n)=\big( 1 + o(1) \big) \, p^{e_G}$ must be $p$-quasi-random.
Note that a non-bipartite graph cannot be forcing~\cite{conlon2010approximate} and that $t(G, H_n)=\big( 1 + o(1) \big) \, p^{e_G}$ for a forest $G$ only forces the graph to be weakly regular.
Skokan and Thoma~\cite{skokan2004bipartite} asked if these are the only exceptions and Conlon, Fox, and Sudakov~\cite{conlon2010approximate} conjecture that this is indeed the case.
\begin{forcconj}
    Every bipartite graph with at least one cycle is forcing.
\end{forcconj}
Conlon, Fox, and Sudakov~\cite{conlon2010approximate} note that this is stronger than Sidorenko's conjecture and for a graph $G$ to be forcing one needs to show that the binomial random graph is the sole minimizer of $t(G, H)$ among graphs with the same density.
Consequentially, fewer graphs are known to be forcing.
They include even cycles~\cite{chung1989quasi}, complete bipartite graphs~\cite{skokan2004bipartite}, graphs with two vertices from one side complete to the other side of size at least two~\cite{conlon2010approximate}, graphs with one vertex complete to the other side~\cite{li2011logarithimic}, and some more complex families~\cite{conlon2017finite}.
With our method we are able to identify much richer families of forcing graphs, because when proving that a graph is Sidorenko we can easily derive the fact that it is also forcing if another forcing graph is used anywhere in the chain of inequalities.
Let us now discuss our results.

\begin{theorem} \label{thm:blowup}
    If $G$ is Sidorenko, then its $m$-fold blow-ups are forcing for any $m \ge 2$.
\end{theorem}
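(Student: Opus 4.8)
Sketch of approach. The plan is to notice that an $m$-fold blow-up density is itself an ordinary homomorphism density into an auxiliary graphon, and that this routes the whole argument through the complete bipartite graph $K_{m,m}$, which for $m\ge 2$ is not merely Sidorenko but \emph{forcing}. Given a graphon $W$, I would introduce the graphon $\hat W$ on $[0,1]^m$ (equipped with Lebesgue measure) defined by $\hat W(\mathbf{u},\mathbf{v}) = \prod_{k,l\in[m]} W(u_k,v_l)$; it is symmetric and $[0,1]$-valued since $W$ is, so it is a genuine graphon. A mechanical expansion of the defining integrals gives, for \emph{every} graph $G$,
\[
  t(G[m],W) \;=\; t(G,\hat W),
\]
because a homomorphism of $G$ into $\hat W$ is exactly an assignment of an $m$-tuple of points of $[0,1]$ to each vertex of $G$ such that across every edge of $G$ all $m^2$ resulting pairs land in the support of $W$ — i.e.\ a homomorphism of $G[m]$ into $W$. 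Specialising to $G=K_2$ gives $t(K_2,\hat W) = t(K_{m,m},W)$.

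The chain of inequalities. First I would re-derive, along the chain that will also deliver the forcing statement, that $G[m]$ is Sidorenko: applying the Sidorenko property of $G$ to the graphon $\hat W$, and then that of $K_{m,m}$ (using that $W$ has edge density $p$), yields
\[
  t(G[m],W) \;=\; t(G,\hat W) \;\ge\; t(K_2,\hat W)^{e_G} \;=\; t(K_{m,m},W)^{e_G} \;\ge\; p^{m^2 e_G} \;=\; p^{e_{G[m]}} .
\]
The point is that the middle quantity is a power of $t(K_{m,m},W)$.

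Forcing. Suppose $(H_n)_{n\in\NN}$ satisfies $t(K_2,H_n)=(1+o(1))\,p$ and $t(G[m],H_n)=(1+o(1))\,p^{e_{G[m]}}$. By compactness of the space of graphons it suffices to show that every subsequential limit $W$ of $(H_n)$ in the cut metric is the constant graphon $p$. Such a $W$ has $\int W=p$ and $t(G[m],W)=p^{e_{G[m]}}$, so both inequalities in the display above are tight; in particular $t(K_{m,m},W)=p^{m^2}$. Since $m\ge2$, the graph $K_{m,m}$ is forcing — two vertices on one side are complete to a side of size at least two \cite{conlon2010approximate}, or one may simply invoke that complete bipartite graphs are forcing \cite{skokan2004bipartite} — hence $W\equiv p$ almost everywhere, and therefore $(H_n)$ is $p$-quasi-random.

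Main obstacle. Honestly the obstacle is slight: the content lies entirely in the reformulation $t(G[m],W)=t(G,\hat W)$, and once it is in hand the result is immediate from the classical facts that $K_{m,m}$ is Sidorenko and, for $m\ge2$, forcing; no stability or defect analysis is required. (If one prefers to avoid graphons, the identity reads $t(G[m],H)=t(G,H^{\times m})$ for finite graphs, where $H^{\times m}$ has vertex set $V(H)^m$ with $\mathbf{u}$ adjacent to $\mathbf{v}$ iff $u_k\sim_H v_l$ for all $k,l$; one runs the chain for $H^{\times m}$ and passes to the limit at the end.) The only thing that needs care is verifying this identity and that $\hat W$ is a bona fide graphon. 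I would expect exactly the same device — presenting a blow-up or subdivision inequality as passing through a known forcing graph — to power the remaining theorems announced in the introduction.
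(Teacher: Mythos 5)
Your proposal is correct and is essentially the paper's own argument in graphon clothing: the substitution $W\mapsto\hat W$ is precisely the concrete form of the order-preserving operator $\llbracket\,\sdot\,\rrbracket_{(\eta,\tau)}$ with $\eta=\id\times\const_{[m]}$ that the paper uses in \cref{thm:gensubdivision} with $F=K_{m,m}$, and your chain $t(G[m],W)=t(G,\hat W)\ge t(K_{m,m},W)^{e_G}\ge p^{m^2e_G}$ together with the tightness-plus-forcing-of-$K_{m,m}$ step matches the paper's proof line for line. No gap.
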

Here the \emph{$m$-fold blow-up} of a given graph $G$ is the graph on vertex set $V_G \times[m]$ where $(v, i)$ and $(w, j)$ are adjacent whenever $v$ and $w$ are adjacent in $G$.
Conlon and Lee~\cite{conlon2018sidorenko}, as a corollary of a more general result, showed that for any bipartite graph $G$ there exists an $m$ such that the $m$-fold blow-up of $G$ is Sidorenko.
Our result makes the stronger assumption that the base graph is already Sidorenko, but in return only requires $m \ge 2$ and additionally obtains that the resulting graph is forcing.
\begin{theorem} \label{thm:subdivision}
    If $F$ and $G$ are Sidorenko and $F$ is symmetric with respect to $s$ and $t$, then the $(F, s, t)$-subdivison of $G$
    is Sidorenko.
    If $F$ is also forcing, then so is the subdivison.
\end{theorem}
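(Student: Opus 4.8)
The plan is to reduce both assertions to the corresponding properties of $F$ and $G$ by means of a single gluing identity for homomorphism densities, carried out at the level of graphons (equivalently, in the flag algebra framework used throughout the paper). Write $S := \sub_{F,s,t}(G)$ for the $(F,s,t)$-subdivision, so that $e_S = e_F\, e_G$. For a graphon $W$, define the kernel $U_W$ on $[0,1]^2$ obtained from the homomorphism integral of $F$ by pinning $s$ to $x$ and $t$ to $y$:
\[
    U_W(x,y) \;=\; \int_{[0,1]^{V_F \setminus \{s,t\}}} \ \prod_{\{a,b\} \in E_F} W(z_a, z_b) \ dz, \qquad z_s := x,\ \ z_t := y .
\]
Then $U_W$ is measurable with values in $[0,1]$, hence itself a graphon; crucially, the hypothesis that $F$ admits an automorphism exchanging $s$ and $t$ is exactly what makes $U_W$ symmetric, i.e.\ $U_W(x,y) = U_W(y,x)$, and it is also what makes $S$ well defined, the edges of $G$ being unordered. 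I will then use the two identities
\[
    t(S, W) \;=\; t(G, U_W) \qquad\text{and}\qquad t(K_2, U_W) \;=\; t(F, W),
\]
the first because in $S$ the internal vertices of $F$ attached to distinct edges of $G$ form disjoint sets, so the integral defining $t(S,W)$ factorises over $E_G$ with the factor of $\{u,v\}\in E_G$ being $U_W(\phi(u),\phi(v))$; the second is Fubini.

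Granting this, the Sidorenko part is a two-line computation. Set $p := t(K_2, W)$. Applying that $G$ is Sidorenko to the graphon $U_W$, and then that $F$ is Sidorenko to $W$,
\[
    t(S, W) \;=\; t(G, U_W) \;\ge\; t(K_2, U_W)^{e_G} \;=\; t(F, W)^{e_G} \;\ge\; \bigl(p^{e_F}\bigr)^{e_G} \;=\; p^{e_F e_G} \;=\; p^{e_S}.
\]
Since Sidorenko's inequality for graphs is equivalent to the same inequality against all graphons, this shows that $S$ is Sidorenko.

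For the forcing statement, assume in addition that $F$ is forcing, and let $(H_n)_{n\in\NN}$ satisfy $t(K_2, H_n) \to p$ and $t(S, H_n) \to p^{e_S}$. If $p=0$, the condition $t(K_2,H_n)\to 0$ alone forces $(H_n)$ to converge to the zero graphon and hence to be $0$-quasi-random, so assume $p>0$. By compactness of the space of graphons in the cut metric it suffices to show that every subsequential limit $W$ of $(H_n)$ equals the constant graphon $p$ almost everywhere. Such a $W$ satisfies $t(K_2, W) = p$ and $t(S, W) = p^{e_S}$, so the chain of inequalities displayed above holds with equality throughout; in particular $t(F, W) = p^{e_F} = t(K_2, W)^{e_F}$. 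Since $F$ is forcing (equivalently: the constant graphon of density $p$ is the unique graphon $W'$ with $t(K_2, W') = p$ and $t(F, W') = p^{e_F}$), we conclude $W \equiv p$. Hence $(H_n)$ converges to the constant graphon $p$, i.e.\ is $p$-quasi-random, and $S$ is forcing.

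The only step requiring genuine care is the bookkeeping behind the gluing identity $t(S, W) = t(G, U_W)$, and the attendant point that the symmetry of $(F,s,t)$ is exactly what makes both $S$ well defined and $U_W$ an honest symmetric graphon, so that the Sidorenko and forcing properties of $G$ may legitimately be invoked for $U_W$. Everything else reduces to Fubini or to the standard transfer between the sequence-of-graphs and the graphon formulations of the two properties; recast inside Razborov's framework as in the rest of the paper, the identities become routine manipulations of the relevant homomorphism-density functionals and the equality analysis in the forcing case is read off directly from the displayed chain of inequalities.
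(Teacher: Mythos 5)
Your proof is correct, but it runs through the classical analytic (graphon) formalism rather than the operator calculus that the paper is built around, so it is worth comparing the two. The paper deduces the statement from \cref{thm:gensubdivision}: it constructs a downward functor $\eta = \id \sqcup (\eta^{(2)} \times \const_{V_F\setminus\{s,t\}})$ and an $\eta$-upward transformation $\tau$ (\cref{lem:tau_from_eta}) so that $\llbracket \nind(G)\rrbracket_{(\eta,\tau)} = \nind(\sub(F;G))$, and then chains $\nind(\sub(F;G)) = \llbracket\nind(G)\rrbracket_{(\eta,\tau)} \ge \llbracket\nind(K_2)\rrbracket_{(\eta,\tau)}^{e_G} = \nind(F)^{e_G} \ge K_2^{e_Fe_G}$, using order preservation (\cref{thm:order_preserving_operator}) and multiplicativity (\cref{rmk:multiplicative}). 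Your kernel $U_W$ is precisely the dual object: precomposing an order-preserving homomorphism $t(\sdot,W)$ with $\llbracket\sdot\rrbracket_{(\eta,\tau)}$ yields $t(\sdot,U_W)$, your identity $t(S,W)=t(G,U_W)$ is the dual of \cref{eq:swap_operators}, $t(K_2,U_W)=t(F,W)$ is the dual of $\llbracket\nind(K_2)\rrbracket_{(\eta,\tau)}=\nind(F)$, and applying Sidorenko for $G$ to the graphon $U_W$ replaces order preservation plus multiplicativity. So the inequality chains are term-for-term identical; the difference is purely one of machinery. What your route buys is self-containedness and transparency for this particular theorem (no need to set up functors and transformations), at the cost of invoking the standard but nontrivial transfer between graph sequences and graphons for both the Sidorenko and the forcing property --- a transfer the paper also relies on, via \cref{prop:order_comb_interpr} and \cref{thm:posAlgHom_iff_convSequ}. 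What the paper's route buys is uniformity: the same operator argument immediately covers blow-ups ($F_v = I_m$), the box product (with the dump label of \cref{prop:localization}), hypergraph expansions, and forcing pairs, none of which your pinned-kernel construction handles without being redone case by case. Two points you correctly flag but should not gloss over in a written version: the symmetry of $F$ with respect to $s,t$ is needed both for $U_W$ to be a symmetric kernel and for the product over the unordered edges of $G$ to be well defined; and the forcing step needs the equivalence of the sequence and graphon formulations of forcing, which follows from the Lov\'asz--Szegedy correspondence but is not completely free.
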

Here a graph $F$ is \defemph{symmetric with respect to distinct vertices} $s, t \in V_F$ if there exists a graph isomorphism mapping one vertex to the other. The $(F, s, t)$-subdivison of a graph $G$ is obtained by replacing each edge in $G$ with a copy of $F$ such that the two end points of the edge are replaced by $s$ and $t$.
If there is a unique choice for $s$ and $t$ up to isomorphism, as is the case with $K_3$, or if there is an obvious choice, as is the case with the endpoints of a path $P_k$, we will sometimes just talk about the $F$-subdivision of a graph $G$.
As $P_2$ is Sidorenko, this generalizes a result of Conlon, Kim, Lee and Lee~\cite{conlon2018some} concerning the standard notion of a subdivision.
Our result is also related to the previously mentioned work of Im, Li, and Liu~\cite{im2024sidorenko} on subdivisions.
Their results~\cite[Theorem 1.3]{im2024sidorenko} allow one to subdivide with any even generalized theta graph $F$\footnote{
    A \defemph{generalized theta graph} is obtained by adding internally disjoint paths between two vertices.
    If all paths have even length it is called an \defemph{even} generalized theta graph.
    $K_{2,t}$ is the simplest even generalized theta graph.
} and the base graph $G$ only needs to be KNRS.\footnote{
    The KNRS Conjecture due to Kohayakawa, Nagle,
    R\"odl, and Schacht~\cite{kohayakawa2010weak} states that for any graph $G$ and every $0 < p, \delta < 1$, there exists $\varepsilon > 0$ such that if $H$ is $(\varepsilon, p)$-dense, then $t(G, H) \ge (1 - \delta) \, p^{e(G)}$.
    Here graph $H$ is $(\varepsilon,p)$-dense if for every $S \subseteq V_H$ with $|S| \ge \varepsilon \, n$ we have $e_{G[S]} \ge p \, |S|^{(2)}$. A graph is called \defemph{KNRS} if is satisfies this conjecture.
    Note that any graph that is Sidorenko is KNRS.
}
Our result makes a stronger assumption regarding the base graph but allows one to subdivide with any symmetric Sidorenko graph.
Note that Im, Li, and Liu also have results~\cite[Theorems 1.4 and 1.6]{im2024sidorenko} in which the subdivision does not have to be the same on each edge.
\begin{theorem}\label{thm:box}
    If $G$ is Sidorenko, then the box product $G \Box K_2$ is forcing.
\end{theorem}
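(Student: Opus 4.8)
The plan is to reduce the statement, by a change of variables, to the Sidorenko property of $G$ together with the classical fact that $C_4$ is forcing. I argue with graphons throughout; fix a graphon $W$ and set $p=t(K_2,W)$, assuming $p>0$ (the degenerate case $p=0$ being immediate) and $e_G\ge 1$ (otherwise $G\Box K_2$ is a forest and the statement is vacuous).

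The crux is the identity
\[ t(G\Box K_2,W)\;=\;p^{v_G}\cdot t\big(G,(\widehat W,\nu)\big), \]
where $\widehat W$ is the tensor-product graphon on $[0,1]^2$ given by $\widehat W\big((x_1,x_2),(y_1,y_2)\big)=W(x_1,y_1)\,W(x_2,y_2)$, and $\nu$ is the probability measure on $[0,1]^2$ with density $p^{-1}W(x_1,x_2)$ with respect to Lebesgue measure. To see this, write $t(G\Box K_2,W)$ as an expectation over independent uniform points $z_v^0,z_v^1\in[0,1]$ indexed by $v\in V_G$ and group them as $z_v=(z_v^0,z_v^1)$: the edges inside the two copies of $G$ contribute $\prod_{uv\in E_G}W(z_u^0,z_v^0)W(z_u^1,z_v^1)=\prod_{uv\in E_G}\widehat W(z_u,z_v)$, the $v_G$ matching edges contribute $\prod_{v\in V_G}W(z_v^0,z_v^1)$, and absorbing this last product into the sampling measure (using $\int W=p$) replaces the uniform $z_v$ by i.i.d.\ $\nu$-distributed points at the price of a factor $p^{v_G}$.

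Now I would run the inequality chain. Since $G$ is Sidorenko, and this inequality holds for graphons over arbitrary probability spaces (equivalently, in its vertex-weighted form), we get $t\big(G,(\widehat W,\nu)\big)\ge t\big(K_2,(\widehat W,\nu)\big)^{e_G}$. A direct computation gives
\[ t\big(K_2,(\widehat W,\nu)\big)\;=\;\int_{[0,1]^2}\!\!\int_{[0,1]^2}\widehat W\;d\nu\,d\nu\;=\;p^{-2}\,t(C_4,W), \]
since the four resulting $W$-factors $W(x_1,y_1),\,W(x_2,y_2),\,W(x_1,x_2),\,W(y_1,y_2)$ realize exactly a $4$-cycle on the vertices $x_1,x_2,y_2,y_1$. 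Combining this with the identity and then invoking that $C_4$ is Sidorenko, i.e.\ $t(C_4,W)\ge p^4$, yields
\[ t(G\Box K_2,W)\;\ge\;p^{v_G}\big(p^{-2}t(C_4,W)\big)^{e_G}\;=\;p^{\,v_G-2e_G}\,t(C_4,W)^{e_G}\;\ge\;p^{\,v_G+2e_G}\;=\;p^{\,e_{G\Box K_2}}, \]
so $G\Box K_2$ is Sidorenko.

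For the forcing conclusion, suppose $(H_n)$ satisfies $t(K_2,H_n)=(1+o(1))\,p$ and $t(G\Box K_2,H_n)=(1+o(1))\,p^{\,e_{G\Box K_2}}$; passing to a subsequential graphon limit $W$, all inequalities above hold with equality, so (using $e_G\ge1$) the last one forces $t(C_4,W)=p^4$ while $t(K_2,W)=p$, and as $C_4$ is forcing this means $W$ is the constant graphon $p$; since every subsequential limit is this graphon, $(H_n)$ is $p$-quasi-random. I do not expect a serious obstacle here: the whole difficulty is concentrated in spotting and justifying the reformulation $t(G\Box K_2,W)=p^{v_G}\,t(G,(\widehat W,\nu))$ and in legitimately applying Sidorenko's inequality to the weighted graphon $(\widehat W,\nu)$, after which the argument is short. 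The feature that delivers forcing for free — in line with the paper's general strategy — is that $C_4$ appears as the ``$K_2$-term'' of this reduction, so Sidorenko-ness of $G\Box K_2$ cannot be tight unless $t(C_4,W)$ is already tight.
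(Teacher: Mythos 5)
Your proof is correct, and it is essentially the paper's proof of \cref{thm:box_rephrased} translated out of the operator formalism into graphon language. The identity $t(G\Box K_2,W)=p^{v_G}\,t\bigl(G,(\widehat W,\nu)\bigr)$ is exactly what \cref{lem:tau_from_eta_special} encodes: the tensor-square part $\id\times\const_{[2]}$ of the downward functor produces $\widehat W$, and the dump label $\ell$ of \cref{prop:localization} implements your reweighting of the vertex measure by $p^{-1}W(x_1,x_2)$ coming from the matching edges. Likewise your computation $t\bigl(K_2,(\widehat W,\nu)\bigr)=p^{-2}\,t(C_4,W)$ is the algebraic statement $\llbracket \nind(K_2)^{\uparrow}\rrbracket_{(\eta,\tau)}=\nind(C_4)$ combined with the normalizing factor $K_2^{2e_G-v_G}$, and both arguments then close identically via Sidorenko for $G$ and forcingness of $C_4$. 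The one step you rightly flag --- applying Sidorenko's inequality to $G$ over the non-uniform vertex measure $\nu$ --- is legitimate (the weighted form over an atomless standard probability space is equivalent to the usual one by pushing forward a measure-preserving map from $[0,1]$) and is precisely the content of \cref{prop:localization}. What your formulation buys is a short, self-contained proof of this one theorem without the downward-functor/upward-transformation machinery; what the paper's formulation buys is that the same two lemmas simultaneously deliver \cref{thm:blowup}, \cref{thm:subdivision}, \cref{thm:forcing_pairs}, and \cref{thm:hypergraphs}. Your forcing step and the treatment of the degenerate cases are also fine.
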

Here the box (or cartesian) product $G \Box F$ of two graphs $G$ and $F$ has vertex set $V_G \times V_F$ and two vertices $(u,v)$ and $(u',v')$ are adjacent if $u=u'$ and $\{v,v'\} \in E_F$ or if $v=v'$ and $\{u,u'\} \in E_G$.
Note that this in particular implies that cubes are forcing.
Before it was only known due to Hatami~\cite{hatami2010graph} that cubes are Sidorenko.
This also partially strengthens results of Kim, Lee, and Lee~\cite{kim2016two}, who showed that $G \square T$ is again Sidorenko if $G$ is Sidorenko and $T$ a tree, though only for the very specific case where $T$ is an edge.

\medskip

A natural question related to forcing graphs is if the edge density $t(K_2, H_n)$ can be replaced by other subgraph densities.
A pair $(F,G)$ is \defemph{forcing} if for any $p$ and any sequence of graphs $(H_n)_{n \in \NN}$ satisfying $t(F, H_n)=\big( 1 + o(1) \big) \, p^{e_F}$ and $t(G, H_n)=\big( 1 + o(1) \big) \, p^{e_G}$ needs to be $p$-quasi-random.
Note that if $H$ is forcing then $(K_2,H)$ is a forcing pair by definition.
Chung, Graham, and Wilson~\cite{chung1989quasi} already identified additional forcing pairs, namely two even cycles $(C_{2s}, C_{2t})$  and two complete bipartite graphs with one side of size two $(K_{2,s}, K_{2,t})$, where $s\not= t$ and $s,t \ge 2$ in both cases.
The first forcing pairs involving non-bipartite graphs were found by Conlon, Hàn, Person, and Schacht~\cite{conlon2012weak} and Reiher and Schacht~\cite{reiher2019forcing} who showed that $K_3$ together with the $K_3$-subdivision of $G$ is forcing if $G$ is itself forcing.
Han, Person, and Schacht~\cite{han2011note} first systematically studied forcing pairs and found a partner $G$ for every $F$ with at least one edge.
\begin{theorem} \label{thm:forcing_pairs}
    If $(F, G)$ is a forcing pair, then so are
    the $K_3$-subdivisions of $F$ and $G$
    as well as
    the $P_k$-subdvisions of $F$ and $G$ for any $k \ge 2$.
\end{theorem}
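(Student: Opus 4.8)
The plan is to route the problem through a derived host $U_n$ on which the forcing-pair hypothesis for $(F,G)$ can be applied verbatim, and then to read off from the resulting quasirandomness two subgraph densities of the original host $H_n$ that are already known to force quasirandomness on their own.

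First I would record the standard subdivision identity. Viewing a host as a graphon $W$ and writing $T_W$ for the operator $f\mapsto\int W(\cdot,y)f(y)\,dy$, one has for every graph $J$ that $t(S_{P_k}(J),W)=t(J,W^{(c)})$ and $t(S_{K_3}(J),W)=t(J,W^{\triangle})$, where $c=e_{P_k}$, the kernel of $W^{(c)}$ is the $c$-th operator power of $T_W$, and $W^{\triangle}(x,y)=W(x,y)\int W(x,z)W(y,z)\,dz$; moreover $W^{(c)}$ and $W^{\triangle}$ are again $[0,1]$-valued symmetric kernels, and $e_{S_{P_k}(J)}=c\,e_J$, $e_{S_{K_3}(J)}=3\,e_J$. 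These are immediate, since each vertex introduced by a subdivision is an independent integration variable whose elimination produces the displayed kernel. I will also use $S_{K_3}(K_2)=K_3$ and $S_{P_k}(C_m)=C_{cm}$, and assume $p\in(0,1]$ and $c\ge 2$ (for $c=1$ the subdivision is trivial).

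Now let $(F,G)$ be a forcing pair, let $S$ be one of the two subdivision operations with edge-multiplier $c$ (so $c=3$ when $S=S_{K_3}$), and let $(H_n)_{n\in\NN}$ satisfy $t(S(F),H_n)=(1+o(1))p^{e_{S(F)}}$ and $t(S(G),H_n)=(1+o(1))p^{e_{S(G)}}$. Setting $U_n:=H_n^{(c)}$ in the path case and $U_n:=H_n^{\triangle}$ in the $K_3$ case, the identity above gives $t(F,U_n)=(1+o(1))(p^{c})^{e_F}$ and $t(G,U_n)=(1+o(1))(p^{c})^{e_G}$. Since being a forcing pair passes from graphs to graphon sequences by a routine sampling argument — replace each $U_n$ by a uniform sample on $m_n\to\infty$ vertices, chosen so that $t(J,\cdot)$ moves by $o(1)$ for every $J$ — the forcing-pair property of $(F,G)$ yields that $U_n$ is $p^{c}$-quasirandom.

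It then remains to descend from $U_n$ to $H_n$, and the point is that this needs no further work beyond combining the identity with forcing pairs that are already known. In the path case, $t(C_{4c},H_n)=t(C_4,U_n)=(1+o(1))p^{4c}$ and $t(C_{6c},H_n)=t(C_6,U_n)=(1+o(1))p^{6c}$, and $(C_{4c},C_{6c})$ is a forcing pair by Chung, Graham, and Wilson~\cite{chung1989quasi}, so $H_n$ is $p$-quasirandom. In the $K_3$ case, $t(K_3,H_n)=t(S_{K_3}(K_2),H_n)=t(K_2,U_n)=(1+o(1))p^{3}$ and $t(S_{K_3}(C_4),H_n)=t(C_4,U_n)=(1+o(1))p^{12}$; since $C_4$ is forcing, $(K_3,S_{K_3}(C_4))$ is a forcing pair by Conlon, H\`an, Person, and Schacht~\cite{conlon2012weak} and Reiher and Schacht~\cite{reiher2019forcing}, so again $H_n$ is $p$-quasirandom. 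Since $(H_n)$ was arbitrary subject to the hypotheses, $(S(F),S(G))$ is a forcing pair in both cases. I expect the one genuinely delicate point to be the graphon-versus-graph bookkeeping in the reduction step; the mildly clever move is that, in the $K_3$ case, we avoid having to prove directly that quasirandomness of $W^{\triangle}$ forces that of $W$ — equivalently, that the rigid identity $W(x,y)\int W(x,z)W(y,z)\,dz\equiv\mathrm{const}$ forces $W$ constant — by bootstrapping instead on the known pair $(K_3,S_{K_3}(C_4))$.
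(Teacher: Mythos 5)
Your proposal is correct and follows essentially the same route as the paper: you transfer the density hypotheses to a derived host ($W^{(c)}$ or $W^{\triangle}$, which is exactly the paper's order-preserving subdivision operator $\llbracket\sdot\rrbracket_{(\eta,\tau)}$ read through the graphon dictionary), apply the forcing-pair hypothesis for $(F,G)$ there, and then descend to the original host by bootstrapping on a known forcing pair in the image of the operator. The known pairs you invoke for the descent --- the even-cycle pairs of Chung--Graham--Wilson in the path case and the pair $(K_3,\,S_{K_3}(C_4))$ of Reiher--Schacht in the triangle case --- are precisely the ones the paper uses in its \cref{cor:forcing_pairs}.
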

Sidorenko's conjecture also naturally generalizes to hypergraphs, but Sidorenko~\cite{Sidorenko_1993} himself already pointed out that there are $r$-partite $r$-uniform hypergraphs for which it does not hold, e.g., the $3$-uniform loose triangle.
More examples for hypergraphs that are not Sidorenko and the relationship to their extremal number was studied by Nie and Spiro~\cite{nie2023sidorenko} as well as Conlon, Lee, and Sidorenko~\cite{conlon2024extremal}.
Examples known to be Sidorenko include complete $r$-partite $r$-uniform hypergraphs~\cite{wigdersoncomplete} and loose expansions of graphs~\cite{nie2023sidorenko}.
Our method gives another proof of the result by Nie and Spiro~\cite{nie2023sidorenko} for loose expansions, while also covering an additional form of expansion.
\begin{theorem}\label{thm:hypergraphs}
    The loose and even hypergraphs obtained from a Sidorenko graph are Sidorenko.
\end{theorem}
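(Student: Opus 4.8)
The plan is to reduce the Sidorenko property of each of these hypergraphs to the Sidorenko property of the underlying graph $G$, by noting that the homomorphism density into an arbitrary target decouples along the auxiliary vertices introduced by the construction. Throughout, fix the Sidorenko graph $G$ and an $r$-uniform target hypergraph $\mathcal{H}$ on a vertex set $V$, and write $p=t(e_r,\mathcal{H})$ for its edge density, where $e_r$ denotes a single $r$-edge. We will use that $G$ being Sidorenko is equivalent, by a standard limiting argument, to the inequality $t(G,W)\ge t(K_2,W)^{e_G}$ holding for every symmetric $[0,1]$-valued weighted graph $W$ (equivalently, every graphon).

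Consider first the loose hypergraph $\mathcal{G}$, obtained by replacing each edge $uv\in E_G$ with a hyperedge $\{u,v\}\cup\{w^{uv}_1,\dots,w^{uv}_{r-2}\}$ on $r-2$ new vertices private to that edge, and define the weighted graph $W$ on $V$ by $W(x,y)=\PP_{z_1,\dots,z_{r-2}\in V}[\{x,y,z_1,\dots,z_{r-2}\}\in E_{\mathcal{H}}]$. Since each private vertex lies in exactly one hyperedge and is shared by no other, averaging a homomorphism $\mathcal{G}\to\mathcal{H}$ over the images of the private vertices one edge at a time yields the identity $t(\mathcal{G},\mathcal{H})=t(G,W)$. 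As $W$ is symmetric and $[0,1]$-valued with $t(K_2,W)=\EE_{x,y}W(x,y)=p$, and as $e_{\mathcal{G}}=e_G$, we obtain
\[
    t(\mathcal{G},\mathcal{H}) \;=\; t(G,W) \;\ge\; \big(\EE_{x,y}W(x,y)\big)^{e_G} \;=\; p^{\,e_{\mathcal{G}}},
\]
which recovers the Nie--Spiro result for loose expansions. For the even hypergraph $\mathcal{G}'$, each edge of $G$ is instead replaced by a fixed symmetric $r$-uniform link $L$ that is itself Sidorenko, rooted at two copies of the edge's endpoints and with internal vertices private --- for instance $L$ a loose even cycle rooted at two antipodal vertices, which is Sidorenko by the loose case applied to $C_{2k}$. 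The same decoupling gives $t(\mathcal{G}',\mathcal{H})=t(G,W')$, where $W'(x,y)$ is the rooted density of $L$ in $\mathcal{H}$ with the two roots sent to $x$ and $y$; again $W'$ is symmetric and $[0,1]$-valued, $e_{\mathcal{G}'}=e_L\cdot e_G$, and $\EE_{x,y}W'(x,y)=t(L,\mathcal{H})\ge p^{\,e_L}$ by the Sidorenko property of $L$, whence
\[
    t(\mathcal{G}',\mathcal{H}) \;=\; t(G,W') \;\ge\; \big(\EE_{x,y}W'(x,y)\big)^{e_G} \;\ge\; p^{\,e_L e_G} \;=\; p^{\,e_{\mathcal{G}'}}.
\]
The role of the evenness hypothesis is precisely to secure that the link $L$ is Sidorenko, with each Cauchy--Schwarz step used to verify this producing a genuine square so that no sign obstruction arises.

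Phrasing all of this inside Razborov's flag-algebra formalism is natural because its two ingredients --- integrating out the private vertices, and (for $L$) applying Cauchy--Schwarz once per internal vertex --- are exactly the elementary partial-averaging and squaring operations available there; once the map $\mathcal{H}\mapsto W$ (respectively $W'$) is set up as a density-preserving reduction from $r$-uniform hypergraphs to $2$-uniform weighted graphs, the argument is a short formal computation that moreover automatically inherits whatever chain of inequalities certified that $G$ is Sidorenko. We expect the only genuine work to be bookkeeping: making the reduction $\mathcal{H}\mapsto W$ precise enough that $t(\mathcal{G},\mathcal{H})=t(G,W)$ is an exact identity --- in particular handling the standard but fiddly discrepancy between homomorphism counts, which permit repeated images, and the convention that a genuine $r$-edge has $r$ distinct vertices --- and, for the even case, fixing the link $L$ explicitly and checking $t(L,\mathcal{H})\ge p^{e_L}$ with the exponent that makes the final count $e_{\mathcal{G}'}$ come out correctly. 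No single step is deep; the substance is the observation that the decoupling turns the hypergraph question into a literal weighted instance of the graph question.
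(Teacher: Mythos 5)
Your argument for the loose hypergraph is correct, and it takes a genuinely different route from the paper: you integrate out the private vertices to produce a weighted graph $W$ on the target's vertex set and then invoke the graphon form of Sidorenko's inequality for $G$, whereas the paper runs everything inside the graph algebra, applying the order-preserving multiplicative operator $\llbracket \sdot \rrbracket_{(\eta,\tau)}$ with $\eta = (\id\times\const_{[m]})\sqcup(\eta^{(2)}\times\const_{S'})$ to the inequality $\nind(G)\ge \nind(K_2)^{e_G}$. Your reduction is more elementary and self-contained; the paper's buys uniformity (one computation covers the loose, the even, and a mixed construction simultaneously) and compatibility with the forcing machinery used elsewhere.

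The even case, however, rests on a misreading of the construction and does not go through as written. The even $r$-uniform hypergraph has vertex set $V_G\times[r/2]$: every vertex $u$ of $G$ is replaced by $r/2$ copies that are \emph{shared} among all edges incident to $u$, and each edge $\{u,v\}$ of $G$ becomes the single hyperedge $\{u,v\}\times[r/2]$. There are no private internal vertices, no multi-edge link $L$, and $e_{\mathcal{G}'}=e_G$ rather than $e_L\cdot e_G$; the role of evenness is only that $r/2$ be an integer, not that some auxiliary gadget be Sidorenko. Consequently your decoupling identity $t(\mathcal{G}',\mathcal{H})=t(G,W')$ with $W'$ a two-variable kernel on $V$ is false: the $r/2$ copies of $u$ cannot be averaged out edge by edge, since they are common to all edges at $u$. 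The repair stays within your framework but requires changing the ground set: define the symmetric $0/1$ kernel $W'$ on $V^{r/2}$ by $W'(\mathbf{x},\mathbf{y})=1$ iff the coordinates of $\mathbf{x}$ and $\mathbf{y}$ together form a hyperedge of $\mathcal{H}$; then $t(\mathcal{G}',\mathcal{H})=t(G,W')$ and $\EE\,W'=p$, giving $t(\mathcal{G}',\mathcal{H})\ge p^{e_G}=p^{e_{\mathcal{G}'}}$. This vertex-blow-up reduction is precisely what the $\id\times\const_{[m]}$ component of the paper's downward functor implements.
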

Here the \defemph{loose} $r$-uniform hypergraph obtained from a $2$-uniform graph $G$ is the hypergraph on vertex set $V_G\sqcup [r-2]\times \binom{V_G}{2}$ with edges the $r$ sets $\{v,w\}\cup [r-2]\times \{\{u,v\}\}$ whenever $\{u,v\}$ is an edge of $G$.
When $r$ is even, the \defemph{even} $r$-uniform hypergraph obtained from $G$ is the hypergraph on vertex set $V_G\times [r/2]$ with edge sets the sets $\{v,w\}\times[r/2]$ whenever $\{u,v\}$ is an edge of $G$.

\medskip

To establish these results, we define a family of graph algebras similar to flag algebras as defined by Razborov~\cite{Razborov_2007} or the graph algebras used by Lov{\'a}sz and Szegedy in the construction of graph limits~\cite{lovasz2006limits}.
For the proof we will introduce an operator allowing us to move from one algebra to another (or in fact more commonly within the same algebra) while preserving some partial ordering encapsulating combinatorially true statements.
This allows us to construct larger graphs from smaller ones while preserving Sidorenko's property.

Note that it is unclear if there are any easily describable examples covered by \cref{thm:blowup} or  \cref{thm:subdivision} that were not previously known to be \emph{Sidorenko}, since in particular verifying whether our constructions  might already be implicitly covered by Szegedy's work~\cite{szegedy2014relative,szegedy2014information} is a non-trivial task on its own.
However, our results certainly obtain several new families of \emph{forcing} graphs, the most notable examples being cubes, balanced blow-ups of a Sidorenko graph, and the forcing pairs covered by \cref{thm:forcing_pairs}.
We also believe that our algebraic approach to Sidorenko's conjecture and in particular the higher degree operators that we develop are interesting in their own right.

\medskip
\noindent\textbf{Outline.} We start by giving a self-contained definition of the relevant graph algebras in \cref{sec:algebras}
and then introduce a family of order preserving operators between these algebras in \cref{sec:operators}, extending what was already described by Razborov~\cite{Razborov_2007}.
\cref{sec:connect_operator_comb} connects these operators to combinatorial operations, most importantly the subdivision of a graph. \cref{sec:proofs_blowup_subdivision}, \cref{sec:proof_box}, \cref{sec:proof_forcing_pairs}, and \cref{sec:proof_hypergraph} contain the proofs of the previously stated theorems building on the framework of the two prior sections. We conclude by discussing our efforts for the smallest biparite graph not known to be Sidorenko, the M\"obius ladder $M_5$, in \cref{sec:concluding}.

\medskip
\noindent\textbf{Notation.}
We use $S, T, U, V, W$ to denote finite sets.
$[n] = \{1, \ldots, n\}$ denote the first $n$ natural numbers, where $[0] = \emptyset$.
$n^{\underline{m}} = n \cdots (n-m+1)$ denotes the falling factorial with $n^{\underline{0}} = 1$ and
$n^{(k)} = n^{\underline k} / k!$ denotes the binomial coefficient with $n^{(0)} = 1$.
$V^{(k)} = \{\{v_1, \ldots v_k\}\mid V_n, v_j \in V, \, V_n \neq v_j \text{ if } i \neq j \}$ denotes the set of distinct $k$-element subsets of $V$, where $|V^{(k)}| = |V|^{(k)}$.

We use greek letters $\alpha$ and $\beta$ for general (normally injective) functions and $\phi$ to emphasize when they are graph or algebra homomorphisms.
We call a function $\alpha : V \to W$ finite if $V$ and $W$ are both finite sets.
We emphasize injective embeddings of a set $V$ in another set $W$ through $\alpha:V\hookrightarrow W$ and bijections between two sets through $\alpha:V\leftrightarrow W$.
We write $\id_S$ for the identity on $S$ and $\restrict{f}{S}$ for the restriction of $f$ to $S$.

We write $V \sqcup W$ for the discriminated union of $V$ and $W$, i.e., the disjoint union of $V$ and $W$ that distinguishs elements from each set even if $V$ and $W$ are not disjoint.
Given two functions $f_1 : V_1 \to W_1$ and $f_2 : V_2 \to W_2$, we let $f_1 \sqcup f_2: V_1 \sqcup V_2 \to W_1 \sqcup W_2 $ denote the function mapping elements from $V_1$ according to $f_1$ and elements of $V_2$ according to $f_2$.
Note that if $f_1$ and $f_2$ are injections, then so is $f_1 \sqcup f_2$.

We will use $F, G, H$ to denote (equivalence classes of) graphs and $f, g, h$ to denote elements of a graph algebra.
$G^c$ denotes the complement of grahp $G$.
$P_k$, $C_k$, $K_n$, $I_n$, $K_{m,n}$ respectively denote the (equivalence class of) the path on $k$ edges, cycle of length $k$, complete graph of order $n$, independent set of order $n$ (and of arbitrary uniformity), and the complete bipartite graphs with parts of order $m$ and $n$.
When dealing with hypergraphs, and $K_n^{(r)}$ denotes the $r$-uniform complete graph on $n$ vertices, so that $K_r^{(r)}$ denotes the $r$-uniform edge.

\section{Defining the graph algebras}
\label{sec:algebras}

Assume we are given an arbitrary but fixed natural number $r$ and finite set $U$ throughout this section.
An \defemph{($r$-uniform $U$-vertex-labeled hyper)graph} $G = (V_G, E_G, L_G)$ consists of a finite vertex set $V_G$, edge set $E_G \subseteq V_G^{(r)}$, and labeling $L_G : V_G \to U$.\footnote{
    For the most part we will only be interested in the unlabeled case, that is $U = \{0\}$,  but we present the framework in somewhat broader generality since vertex labels will be needed in some specific applications.
    Note that using vertex labels is in fact equivalent to using types, as done by Lov\'asz and Szegedy~\cite{lovasz2006limits} and Razborov~\cite{Razborov_2007}, since any graph with a given type of order $t$ can be expressed as a graph with $2^t$ (in the case of $r=2$) vertex labels, each encoding one of the possible ways of connecting a given vertex to the set of $t$ labeled vertices.
}
We write $v_G = |V_G|$ for its \defemph{order} and $e_G = |E_G|$ for its \emph{size}.
We will write $\varnothing$ for the unique \defemph{empty graph} with vertex set $V = \emptyset$ and $\bullet_\ell$ for the unique graph with a single vertex labeled with $\ell \in U$.
We let $\g{V}$ denote the set of graphs on vertex set $V$ and label set $U$.
Given some injective embedding $\alpha: S \hookrightarrow V_G$  of a set $S$ in $V_G$, the \defemph{subgraph induced by $S$ in $G$ along $\alpha$} is defined as
\begin{equation} \label{eq:ind}
    \ind_\alpha G = \big( S, \big\{\{v_1, \ldots, v_r\}\mid \{\alpha(v_1), \ldots, \alpha(v_r)\} \in E_G \big\}, L_G \circ \alpha \big).
\end{equation}
When $S \subseteq V_G$ and $\alpha = \restrict{\id_{V_G}}{S}$, we write $G[S] = \ind_{\alpha} G$ for the usual notion of a \defemph{subgraph induced by $S$ in $G$}.

Let two graphs $F$ and $G$ be given.
$F$ is \defemph{contained} in $G$, written $F \subseteq G$, if $V_F = V_G$, $L_F = L_G$, and $E_F \subseteq E_G$.
They are \defemph{equivalent}, written $F = G$, if $F \subseteq G$ and $G \subseteq F$.
They are \defemph{isomorphic}, written $F \simeq G$, if there exists some bijection $\alpha: V_F \leftrightarrow V_G$ such that $\ind_\alpha F = G$.
The notion of isomorphism defines an equivalence relationship and we write $\gc$ for the set of all equivalence classes of graphs on finite vertex sets.
Out of notational convenience, we will use $G$ both to identify the equivalence class and the concrete representative. 

Let us make two obvious but important observations. Fist, taking induced subgraphs is compatible with composition, that is
\begin{equation} \label{eq:ind_comp}
    \ind_\alpha \circ \ind_\beta G = \ind_{\beta \circ \alpha} G
\end{equation}
for any $\alpha: S  \hookrightarrow T$ and $\beta :T  \hookrightarrow V_G$. Second, a graph $G$ is trivially and uniquely determined by knowing $\ind_\alpha G$ for all $\alpha: [r] \hookrightarrow V_G$.

\paragraph{The graphs algebra.}
Consider now the real vector space $\gcvs$ of formal finite linear combinations of elements in $\gc$.
We define the product of $G \in \gc$ and $F \in \gc$ as the element in $\gcvs$ given by
\begin{equation} \label{eq:graph_prod}
    G \cdot F = \!\!\!\!\!\!\!\!\!\!\!\!\!\!\!\!\! \sum_{\substack{H \in \g{V_G \sqcup V_F} \\ H[V_G]=G \wedge H[V_F]=F}} \!\!\!\!\!\!\!\!\!\!\!\!\!\!\!\!\! H.
\end{equation}
This just represents the disjoint union of $G$ and $F$ with unspecified connectivity between the vertices of $G$ and $F$.
We can linearly extend this to form a commutative and associative product on $\gcvs$ with unit element $\varnothing$, obtaining an algebra over $\RR$.
This (largely) corresponds to the algebra used in the characterization of graph limits~\cite{lovasz2006limits}.
Writing $\bullet = \sum_{\ell \in U} \bullet_\ell$, we follow Razborov~\cite{Razborov_2007} and additionally factor out the ideal $\mathcal K = \langle G - \bullet \cdot G \mid G\in\mG_\simeq \rangle$.
This yields the algebra $\ga = \mathbb R[\mG_\simeq]/\mathcal K$.
We will again overload notation and use the same symbol to denote both a representative in $\RR[\mG_\simeq]$ and its equivalence class in the quotient, so that both $\bullet$ and $\varnothing$ denote the unit in $\ga$.
When dealing with graphs and algebras of different underlying uniformity $r$ or label set $U$, we will use the notation $\gx{V}{U}{r}$, $\gcx{U}{r}$, and $\gax{U}{r}$ to distinguish between them.
We will also simply write $\gxx{V}{r}$, $\gcxx{r}$, and $\gaxx{r}$ if $U$ is a singleton.

\paragraph{Injective strong homomorphism density.}
Let us address why we factor out the ideal $\mathcal K$.
Given $G, H \in \gc$ with $v_G \le v_H$, we let
\begin{equation} \label{eq:inj}
    \inj(G, H) =  \# \{\alpha: V_G \hookrightarrow V_H \mid \ind_\alpha H = G\} / v_H^{\,\underline{v_G}}
\end{equation}
denote the \defemph{injective strong homomorphism density} of $G$ in $H$.\footnote{
    Note that our notion of density is based on counting injective functions that induce copies of graphs.
    Lov\'asz and Szegedy~\cite{lovasz2006limits} primarily count injective functions that give not-necessarily induced copies of a graph, while Razborov~\cite{Razborov_2007} counts subsets giving induced copies.
}
For $v_G > v_H$ we let $\inj(G, H) = 0$ and note that $\inj(\varnothing, H) = 1$ for any $H$.
We can linearly extend $\inj(\sdot, H)$ to $\gcvs$ for any $H \in \gc$ but \textit{not} to $\ga$. However, for any large enough graph $H$, the density of $G$ is the same as the density of $G \cdot \bullet$.
More specifically, we call a sequence of graphs $(H_n)_{n \in \NN}$ \defemph{convergent}\footnote{
    Lov\'asz and Szegedy~\cite{lovasz2006limits} primarily connect convergent sequences to \defemph{graphons} and Razborov~\cite{Razborov_2007} to \defemph{positive homomorphisms}.
    For our purposes we will need neither of these characterization.
} if the orders $v_{H_n}$ are strictly increasing and $\lim_n \inj(F, H_n)$ exists for every $F \in \gc$.
Note that, by compactness, every sequence of graphs of strictly increasing order has a convergent subsequence.
Given any such convergent sequence, we have
\begin{equation} \label{eq:lim_mult}
    \lim_n \inj(F \cdot G, H_n) = \lim_n \inj(F, H_n) \cdot \lim_n \inj(G, H_n)
\end{equation}
for any $F, G \in \gc$ since asymptotically the number of injective functions $V_F \sqcup V_G \hookrightarrow H_n$ is the product of the number of injective functions $V_F \hookrightarrow H_n$ and $V_G \hookrightarrow H_n$.
In particular, since $\inj(\bullet, H) = \inj(\varnothing, H) = 1$ for any $H \in \gc$ with $v_H \ge 1$, we have $\lim_n \inj(F \cdot \bullet, H_n) = \lim_n \inj(F, H_n)$.
It follows that $\lim_n \inj(\sdot, H_n)$ vanishes on $\mK$ for any convergent sequence $(H_n)_{n \in \NN}$ and we can therefore extend it to $\ga$ while preserving multiplicativity.

\paragraph{Connecting strong and weak homomorphism densities.}
Of course, in the introduction we have used a weak and not-necessarily injective notion of homomorphisms $t(\sdot, H)$ in a graph $H \in \gc$ while now we are considering the limit behavior of strong and injective homomorphisms $\lim_n \inj(\sdot, H_n)$ in a convergent sequence $(H_n)_{n \in \NN}$.
We can however connect the two by letting the \defemph{non-induced}\footnote{
    Meanwhile, the second and third author, not more than a swallow's flight away, have discovered something. \textit{{ni}! ({ni}! {ni}! {ni}! {ni}! {ni}!)} {Who are you?} \textit{We are the Knights Who Say {ni}! ({ni}!)} {No! Not the Knights Who Say {ni}!} \textit{The same!} The first author was not amused by this.
} version $\nind(G)$ of a graph $G \in \gc$ denote the sum of all graphs on the same vertex set as $G$ containing all edges of $G$, that is
\begin{equation} \label{eq:nind}
    \nind(G) = \sum_{\substack{G \subseteq H}} H.
\end{equation}
Note that obviously $\nind(K_k) = K_k$.
Considering the specific convergent sequence $(H_n')_{n \in \NN}$ where $H_n'$ is the $n$-fold blow-up of a given $H \in \gc$, we have
\begin{equation} \label{eq:t_nind}
    t(G, H) = \lim_n \inj \big( \nind(G), H_n' \big).
\end{equation} 

\paragraph{A partial order on $\ga$.}

The goal is to define a partial ordering on  $\ga$ that expresses {\lq}true{\rq} combinatorial expressions.
We extend our notation from $\gc$ and let the \defemph{order} $v_f$ denote the largest order of any of the summands of a given $f \in \gcvs$.
We say $f$ is of \defemph{uniform order} if all summands have the same order.
Note that the coefficient of any graph $G \in \gc$ in $f \in \gcvs$ of uniform order $v_f = v_G$ is
\begin{equation} \label{eq:coeff_from_inj}
    \inj(f, G)/\inj(G, G).
\end{equation}
Using the definition of the ideal $\mK$, there is a unique representative of any (large enough) uniform order for any given $f \in \ga$.
\begin{definition}
    An element $f \in \ga$ is \defemph{positive}, written as $f \ge 0$, if for any $\varepsilon > 0$ there exists a uniform order representative of $f + \varepsilon$ with positive coefficients.
\end{definition}
The notion of positivity is simply the closure of the cone of the equivalence classes of positive linear combinations of elements in $\gc$.
We extend it to a partial order by saying that $f \ge f'$ for any $f, f' \in \ga$ if $f - f'$ is positive, turning $\ga$ into a poset.
The following proposition connects our notion of positivity to convergent sequences and motivates why it describes {\lq}true{\rq} combinatorial expressions.
\begin{proposition}\label{prop:order_comb_interpr}
    An element $f \in \ga$ is positive if and only if $\lim_n \inj (f, H_n) \ge 0$ for all convergent sequences of graphs $(H_n)_{n \in \NN}$. In particular, $\lim_n \inj(\cdot, H_n)$ is order-preserving.
\end{proposition}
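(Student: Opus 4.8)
The plan is to establish the two directions of the equivalence separately, then deduce order-preservation as an immediate corollary.

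For the forward direction, suppose $f \in \ga$ is positive. Fix a convergent sequence $(H_n)_{n \in \NN}$ and $\varepsilon > 0$. By definition of positivity, there is a uniform order representative $g$ of $f + \varepsilon$ with all coefficients nonnegative, say $g = \sum_G c_G \, G$ with $c_G \ge 0$ and all summands of some common order $k$. Since $\inj(\sdot, H_n)$ extends linearly to $\gcvs$ and passes to $\ga$ (as established in the discussion preceding the proposition, because it vanishes on $\mK$), we get $\lim_n \inj(f + \varepsilon, H_n) = \lim_n \inj(g, H_n) = \sum_G c_G \lim_n \inj(G, H_n) \ge 0$, using that each $\inj(G, H_n) \ge 0$. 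Now $\lim_n \inj(f + \varepsilon, H_n) = \lim_n \inj(f, H_n) + \varepsilon$, again by linearity and because $\inj(\varnothing, H_n) = 1$. Letting $\varepsilon \to 0$ gives $\lim_n \inj(f, H_n) \ge 0$.

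For the reverse direction, I would argue by contraposition: assume $f$ is not positive and produce a convergent sequence witnessing $\lim_n \inj(f, H_n) < 0$. Fix $k$ large enough that $f$ has a (unique, by the remark after \cref{eq:coeff_from_inj}) uniform order representative of order $k$, and likewise for $f + \varepsilon$ for all small $\varepsilon$; write this representative as $f_\varepsilon = \sum_{G \in \g{[k]}_\simeq} c_G(\varepsilon)\, G$, where each $c_G(\varepsilon)$ is affine in $\varepsilon$. Non-positivity of $f$ means that for every $\varepsilon > 0$ some coefficient $c_{G_0}(\varepsilon)$ is strictly negative; by finiteness of $\g{[k]}_\simeq$ and continuity in $\varepsilon$, there is a single graph $G_0$ with $c_{G_0}(0) < 0$. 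By \cref{eq:coeff_from_inj}, this coefficient is recovered as $\inj(f, G_0)/\inj(G_0, G_0)$. The natural candidate sequence is then built from blow-ups of $G_0$: take $H_n$ to be the $n$-fold blow-up of $G_0$ (or a suitable quasi-random-type modification), pass to a convergent subsequence by compactness, and check that $\lim_n \inj(G, H_n)$ selects out exactly the coefficient structure so that $\lim_n \inj(f, H_n)$ equals (a positive multiple of) $c_{G_0}(0) < 0$. Alternatively, one can invoke a separation/duality argument: the set of $f$ with $\lim_n \inj(f, H_n) \ge 0$ for all convergent $(H_n)$ is a closed convex cone, it clearly contains every element of $\gc$, hence contains the closure of their positive combinations, which is exactly the positive cone; the reverse inclusion is what needs the blow-up construction to show no extra elements sneak in.

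The main obstacle is precisely this reverse inclusion: showing that a non-positive $f$ is genuinely detected by \emph{some} convergent sequence, i.e., that the limit functionals $\lim_n \inj(\sdot, H_n)$ are rich enough to cut out the positive cone exactly rather than some larger cone. This requires knowing that for any target graph $G_0$ one can realize a convergent sequence whose induced-density profile concentrates appropriately on $G_0$ — essentially the statement that the "induced subgraph densities" of blow-ups (with random fill-in on the diagonal blocks) converge to the expected values, so that $\inj(G, H_n)$ behaves multiplicatively and picks out the coefficient of $G_0$. Once the right sequence is in hand, the computation is routine. Finally, order-preservation follows immediately: if $f \ge f'$ then $f - f'$ is positive, so $\lim_n \inj(f - f', H_n) \ge 0$ for every convergent sequence, and by linearity $\lim_n \inj(f, H_n) \ge \lim_n \inj(f', H_n)$.
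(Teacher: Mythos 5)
Your forward direction is correct and coincides with the paper's argument. The reverse direction is where the genuine gap lies, and it is exactly the obstacle you flag yourself without closing. Two things go wrong with the route you sketch. First, the sequence you propose cannot work: $\inj$ counts \emph{induced} copies, and blow-ups destroy induced-subgraph densities, so $\lim_n\inj(f,H_n)$ for $H_n$ a blow-up of $G_0$ is unrelated to $\inj(f,G_0)$ (for instance $\inj(K_3,K_3)=1$, while the induced triangle density of balanced blow-ups of $K_3$ tends to $\nicefrac{2}{9}$). The intuition that blow-ups ``select out'' a target graph is the right one for the non-induced densities $t(\sdot,H)$, cf.\ \cref{eq:t_nind}, but not for $\inj$. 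Second, your reformulation of non-positivity at a single fixed order $k$ is not an equivalence: the paper's own Goodman example $K_3+I_3-\nicefrac{1}{4}$ is positive in $\gaxx{2}$ even though its unique order-$3$ representative has negative coefficients, so a negative coefficient at one fixed order neither certifies non-positivity nor, more to the point, produces anything resembling a convergent sequence.

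The missing idea is that non-positivity hands you witnesses at \emph{every} order, and these witnesses already \emph{are} the convergent sequence. Unfolding the definition, there is a fixed $\varepsilon_0>0$ such that for every $k$ the order-$k$ representative of $f+\varepsilon_0$ has some graph $G_k$ of order $k$ with negative coefficient; by \cref{eq:coeff_from_inj} this says precisely $\inj(f+\varepsilon_0,G_k)<0$. The $G_k$ have strictly increasing order, so by compactness some subsequence $(H_n)_{n\in\NN}$ is convergent, and then $\lim_n\inj(f+\varepsilon_0,H_n)\le 0$, i.e.\ $\lim_n\inj(f,H_n)\le-\varepsilon_0<0$. This is the paper's proof of the ``if'' direction (stated contrapositively), and no blow-up, concentration, or duality argument is needed.
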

\begin{proof}
    For the {\lq}if{\rq} direction, assume that $\lim_n \inj(f, H_n) \ge 0$ holds for any convergent sequence $(H_n)_{n \in \NN}$.
    Assume there exists some $\varepsilon$ for which all uniform order representatives of $f + \varepsilon$ have a negative coefficient.
    Choosing the graphs corresponding to these coefficients as a sequence of graphs of increasing order and selecting a convergent subsequence $(H_n)_{n \in \NN}$, we would get $\lim_n \inj (f + \varepsilon, H_n) \le 0$ by \cref{eq:coeff_from_inj} and therefore $\lim_n (f, H_n) \le - \varepsilon$, contradicting the assumption.
    
    For the {\lq}only if{\rq} direction, let $(H_n)_{n \in \NN}$ be an arbitrary convergent sequence.
    Since $f$ is positive, there exists some uniform order representative $f_\varepsilon \in \gcvs$  of $f + \varepsilon \in \ga$ for any $\varepsilon > 0$ with positive coefficients, so that in particular $\inj(f_\varepsilon, H_n) \ge 0$.
    We therefore have
    \begin{align*}
        \lim_n \inj(f, H_n) & = \lim_n \big(\inj(f, H_n) +  \lim_\varepsilon \varepsilon \, \inj( \varnothing, H_n) \big) = \lim_\varepsilon \lim_n  \inj(f_\varepsilon, H_n) 
        \ge  0
    \end{align*}
    as desired.
\end{proof}
Note that \cref{prop:order_comb_interpr} also implies that the partial order on $\ga$ is compatible with addition and multiplication and that it is in fact equivalent to Razborov's notion of positivity based on positive homomorphisms~\cite[§3]{Razborov_2007}.
Since sequences based on blow-ups are dense among convergent sequences, we therefore have the following way of expressing Sidorenko's property when $U$ is a singleton and $r=2$.
\begin{remark}\label{rem:sidorenko}
    A graph $G$ is Sidorenko if and only if $\nind(G) \ge K_2^{e_G}$.
\end{remark}

\newcounter{upward_downward}
\label{sec:operators}

\section{Order-preserving algebra operators} \label{sec:operators}

Given two natural numbers $r$ and $r'$ and two finite label sets $U$ and $U'$, we want to introduce a family of order-preserving linear operators $\llbracket \sdot \rrbracket_{(\eta, \tau)}: \gax{U'}{r'} \to \gax{U}{r}$.
Let us specify the requirements for the two underlying functions $\eta$ and $\tau$, which we will respectively refer to as the downward functor and upward transformation.\footnote{
    The terms \defemph{functor} and \defemph{(natural) transformation} allude to a connection to a category theoretic formulation, see~\cite{kiem2024categorification}, while downward and upward stem from a connection to Razborov's upward operators $\pi^I: \mA \to \mA^T$~\cite[Section 2.2]{Razborov_2007} and downward operators $\llbracket \sdot \rrbracket_T : \mA^T \to \mA$~\cite[Section 2.3]{Razborov_2007}.
    Let $T$ be some arbitrary but fixed type of order $t$, and $U$ a set of $2^{t}$ labels.
    Let $\eta$ be a downward functor of the form $\id \sqcup \const_{[t]}$ and $\tau$ an $\eta$-upward transformation from $\gcx{U}{2}$ to  $\gcxx{2}$. Using the previously noted connection between vertex labels and types, the resulting operator maps from $\gaxx{2}$ to $\gax{U}{2}$ and is equivalent to Razborov's upward operator $\pi^I$.
    If on the other hand $\eta = \id$ and we let the $\eta$-upward transformation $\tau$ map from $\gcxx{2}$ to  $\gcx{U}{2}$, then then the resulting operator maps from $\gax{U}{2}$ to $\gaxx{2}$ and is equivalent to Razborov's downward operator $\llbracket \sdot \rrbracket_T$.
    Again letting $\eta = \id \sqcup \const_{[t]}$, we can therefore consider each operator $\llbracket \sdot \rrbracket_{(\eta,\tau)}$, where now $\tau$ is an $\eta$-upward transformation from $\gaxx{r}$ to  $\gaxx{r}$, as a composition of the direct sum of upward operators to algebras with types of size $t$ with the direct sum of all corresponding downward operators.
}
\begin{definition}
    A function $\eta$ is called a \defemph{downward functor} if it maps any finite set $M$ to another finite set $\eta(M)$ and any finite injection $\alpha: M \hookrightarrow N$ to an injection $\eta(\alpha)  :\eta(M) \hookrightarrow \eta(N)$ such that it is compatible with composition, that is 
    \begin{equation} \label{eq:functor_commutes}
        \eta(\alpha) \circ \eta(\beta) = \eta(\alpha \circ \beta)
    \end{equation}
    for any finite injections $\alpha: N \hookrightarrow T$ and $\beta: M \hookrightarrow N$.
\end{definition}
Note that it maps identities to identities since $\eta$ is compatible with composition.
The identity function trivially is a downward functor and, given any finite set $S$, the constant function $\const_S$ mapping any finite set $M$ to $S$ and any injection $\alpha: M \hookrightarrow N$ to the $\id_S$ is likewise a downward functor.
Let us extend these two examples.
\begin{definition}
    For any natural number $k$, let $\eta^{(k)}$ denote the function mapping any finite set $M$ to $M^{(k)}$ and any finite injection $\alpha: M \hookrightarrow N$ to the injection mapping $\{m_1, \ldots, m_k \}$ to $\{ \alpha(m_1), \ldots, \alpha(m_k) \}$. Given two downward functors $\eta$ and $\eta'$, we also let
    \begin{itemize}\itemsep0pt
        \item ... their \defemph{union} $\eta \sqcup \eta'$ map sets $M$ to $\eta(M) \sqcup \eta'(M)$ and $\alpha: M \hookrightarrow N$ to $\eta(\alpha) \sqcup \eta'(\alpha)$ and
        \item ... their \defemph{product} $\eta \times \eta'$ map sets $M$ to $\eta(M) \times \eta'(M)$ and $\alpha: M \hookrightarrow N$ to the injection mapping $(m, m')$ to $\big( \eta(\alpha)(m), \eta'(\alpha)(m') \big)$.
    \end{itemize}
\end{definition}
Note that $\eta^{(k)}$ is a downward functor and so are the union and product of two downward functors.
We can cover the two previous trivial examples through $\id = \eta^{(1)}$ and $\const_{S} = \eta^{(0)} \sqcup \ldots \sqcup \eta^{(0)}$, where we are taking the union of $|S|$ copies of $\eta^{(0)}$.

\medskip

We now take the step towards graphs and extend any given injection $\alpha : M \hookrightarrow N$ of finite sets $M$ and $N$ to the injection $\g{M} \hookrightarrow \g{N}$ that maps a graph $G$ to $\ind_\alpha(G)$.
This allows us to define the second function $\tau$ in relation to a given downward functor $\eta$.
\begin{definition}\label{def:commutative_diagram}
    For a given downward functor $\eta$, a function $\tau: \gx{\eta(\sdot)}{U}{r} \to \gx{\sdot}{U'}{r'}$ is called an \defemph{$\eta$-upward transformation} if for every finite injection $\alpha: M \hookrightarrow N$ and every $G \in \gx{\eta(N)}{U}{r}$ we have
    \begin{equation}\label{eq:commutative_diagram}
        \tau (\ind_{\eta(\alpha)} G ) = \ind_\alpha \tau(G),
    \end{equation}
    that is the following diagram commutes:
    \begin{equation*}
        \begin{tikzcd}
            \gx{\eta(N)}{U}{r}\arrow[r, "\tau"]\arrow[d, "{\ind_{\eta(\alpha)}}", swap] & \gx{N}{U'}{r'}\arrow[d, "{\ind_\alpha}"]  \\
            \gx{\eta(M)}{U}{r}\arrow[r, "\tau", swap] & \g{M}^{U',r'}
        \end{tikzcd}.
    \end{equation*}
\end{definition} 
We have the following important property of $\eta$-upward transformations.
\begin{lemma} \label{lemma:uppward_necessary_sufficient}
    Any $\eta$-upward transformation $\tau$ is determined by its restriction to $\gx{\eta([r])}{U}{r}$.
In the other direction, if we know $\tau$ on $\gx{\eta([r])}{U}{r}$
    as well as the fact that \cref{eq:commutative_diagram} holds
    for $M=N=[r]$, then $\tau$ uniquely extends to an $\eta$-upward transformation.
\end{lemma}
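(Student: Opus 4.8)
\emph{Approach.} Both halves are driven by the fact recalled above that an $r$-uniform graph is determined by knowing $\ind_\alpha$ of it along all injections of $[r]$ into its vertex set, applied now to the \emph{target} $\tau(G) \in \gx{N}{U'}{r'}$, together with the naturality square \cref{eq:commutative_diagram} and the compatibility identities \cref{eq:ind_comp} and \cref{eq:functor_commutes} used to chase compositions. Suppose first that $\tau$ and $\tau'$ are $\eta$-upward transformations agreeing on $\gx{\eta([r])}{U}{r}$; I claim $\tau(G) = \tau'(G)$ for every finite $N$ and every $G \in \gx{\eta(N)}{U}{r}$. If $|N| \ge r$, then $\tau(G)$ and $\tau'(G)$ are each determined by their induced subgraphs along the injections $\alpha : [r] \hookrightarrow N$ (every potential edge lies inside some $r$-element subset), and \cref{eq:commutative_diagram} gives $\ind_\alpha \tau(G) = \tau(\ind_{\eta(\alpha)} G)$ with $\ind_{\eta(\alpha)} G \in \gx{\eta([r])}{U}{r}$, and likewise for $\tau'$; since $\tau$ and $\tau'$ agree there, the two graphs coincide. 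If $|N| < r$, fix an injection $\iota : N \hookrightarrow [r]$; the restriction map $\ind_{\eta(\iota)} : \gx{\eta([r])}{U}{r} \to \gx{\eta(N)}{U}{r}$ is onto (any graph on $\eta(N)$ extends to $\eta([r])$), so $G = \ind_{\eta(\iota)} \widetilde G$ for some $\widetilde G$, whence $\tau(G) = \ind_\iota \tau(\widetilde G) = \ind_\iota \tau'(\widetilde G) = \tau'(G)$ by \cref{eq:commutative_diagram}. This proves the first assertion.

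\emph{Existence of the extension.} Uniqueness of an extension is the first assertion, so only existence remains. Writing $\tau_0$ for the given map on $\gx{\eta([r])}{U}{r}$, I define $\tau$ on each $\gx{\eta(N)}{U}{r}$ exactly as the computations above force: for $|N| \ge r$, let $\tau(G)$ be the $r'$-uniform graph on $N$ whose restriction along every $\alpha : [r] \hookrightarrow N$ equals $\tau_0(\ind_{\eta(\alpha)} G)$; for $|N| < r$, choose $\iota : N \hookrightarrow [r]$ and $\widetilde G \in \gx{\eta([r])}{U}{r}$ with $\ind_{\eta(\iota)} \widetilde G = G$ and put $\tau(G) := \ind_\iota \tau_0(\widetilde G)$. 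Two things then have to be checked: (a) that these prescriptions are well posed — in the first case the prescribed restrictions are pairwise compatible and hence assemble into a genuine graph, and in the second the value is independent of $\iota$ and $\widetilde G$; and (b) that the $\tau$ so obtained satisfies \cref{eq:commutative_diagram} for every $\alpha : M \hookrightarrow N$, which by the recalled fact one need only verify after restricting along injections of $[r']$ into $M$, and which then unwinds via \cref{eq:ind_comp}.

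\emph{Main obstacle.} The crux is the consistency in (a). If $\alpha_1, \alpha_2 : [r] \hookrightarrow N$ have a common image, then $\alpha_2 = \alpha_1 \circ \pi$ for the permutation $\pi = \alpha_1^{-1} \circ \alpha_2 \in S_r$, so \cref{eq:functor_commutes} and \cref{eq:ind_comp} yield $\ind_{\eta(\alpha_2)} G = \ind_{\eta(\pi)} \ind_{\eta(\alpha_1)} G$, and applying the hypothesized $M = N = [r]$ instance of \cref{eq:commutative_diagram} to $\tau_0$ converts this into $\tau_0(\ind_{\eta(\alpha_2)} G) = \ind_\pi \tau_0(\ind_{\eta(\alpha_1)} G)$, which is exactly the assertion that the two prescribed restrictions agree on that $r$-set; when $r = r'$ this is all that is required. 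Propagating the compatibility to prescriptions that come from merely overlapping $r$-sets, and matching the induced vertex labels across them, is the delicate step, and it — rather than the first assertion or part (b) — is where I expect the real work to lie; once it is in place, the independence claims for $|N| < r$ and the verification in (b) follow by the same composition-chasing.
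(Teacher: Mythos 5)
Your argument is the paper's argument: uniqueness because $\tau(G)$ is determined by its restrictions along injections of a fixed small set into $N$, which \cref{eq:commutative_diagram} identifies with values of $\tau$ on $\gx{\eta([r])}{U}{r}$; existence by prescribing these restrictions and checking consistency; and the general case of \cref{eq:commutative_diagram} by the composition chase through \cref{eq:ind_comp} and \cref{eq:functor_commutes}. The paper's proof consists of exactly the consistency computation you carry out for two injections with a common image (phrased there as equivariance under precomposition with a bijection $\sigma$ of $[r]$, using the hypothesized $M=N=[r]$ instance) together with your part (b); it does not separately treat $|N|<r$, which you handle cleanly.

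The step you defer as ``the real work'' --- consistency of prescriptions coming from merely overlapping images --- is vacuous for edges, and seeing why dissolves most of the worry. The $[r]$ in the lemma should be read as $[r']$, the uniformity of the \emph{target} (this is how the lemma is invoked in \cref{lem:tau_from_eta}, where $\tau$ is specified on $\gxx{\eta([r'])}{r}$ with values that are single $r'$-edges or non-edges): $\tau(G)$ is $r'$-uniform, hence determined by its restrictions along $\beta\colon[r']\hookrightarrow N$, and a potential edge $e$ of $\tau(G)$ is an $r'$-set, so $e\subseteq\im\beta$ forces $e=\im\beta$. Thus the only prescriptions constraining a given potential edge come from injections with a common image, which is precisely the case you already settled. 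What survives of your concern is label consistency when $|U'|>1$: the label that $\tau_0(\ind_{\eta(\beta)}G)$ assigns to a shared vertex could depend on $\im\beta$, and equivariance alone does not exclude this (take $r=r'=2$, $\eta=\id$, and $\tau_0$ labelling both endpoints $a$ on an edge and $b$ on a non-edge; a vertex of degree one in a three-vertex $G$ then receives clashing prescriptions, and the same example breaks the independence of $\widetilde G$ in your $|N|<r$ case). That is a genuine defect, but of the lemma as stated rather than of your proof relative to the paper's: the paper's proof skips it silently, and in every application the labels are assigned locally (depending only on $\ind_{\eta(\alpha)}H$ for $\alpha\colon\{i\}\hookrightarrow[r']$), where the issue disappears. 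So your proposal matches the paper's proof; the one step you leave open is either vacuous (edges) or a gap the paper's own proof does not close either (labels).
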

\begin{proof}
    Since any $\eta$-upward transformation $\tau$ must satisfy $\tau (\ind_{\eta(\beta)} G ) = \ind_\beta \tau(G)$ for all finite sets $N$, $G \in \gx{\eta(N)}{U}{r}$ and $\beta : [r] \hookrightarrow N$ by \cref{eq:commutative_diagram}, it follows that $\tau(G)$ is uniquely determined by knowing the restriction of $\tau$ to $\gx{\eta([r])}{U}{r}$ since the injections $\alpha: [r] \hookrightarrow V_G$ trivially determined any graph $G$.
    Likewise, we can extend $\tau$ from $\gx{\eta([r])}{U}{r}$ to $\gx{\eta(N)}{U}{r}$ by letting
    \begin{equation}\label{eq:extend_I}
        \ind_\beta \tau(G) = \tau (\ind_{\eta(\beta)} G )
    \end{equation}
    for any $G \in \gx{\eta(N)}{U}{r}$ and $\beta : [r] \hookrightarrow N$.
    Note that this is well defined as by assumption for every bijection $\sigma: [r] \leftrightarrow [r]$,
    \begin{align*}
        \ind_\sigma\tau(\ind_{\eta(\beta)} G)
        & \overset{\eqref{eq:extend_I}}{=}  \ind_{\sigma}  \circ \ind_{\beta} \tau(G)  \\
        & \overset{\eqref{eq:ind_comp}}{=} \ind_{\beta \circ \sigma}  \tau(G)  \\
        & \overset{\eqref{eq:extend_I}}{=} \tau(\ind_{\eta(\beta \circ \sigma)} G) 
    \end{align*}
    for any $G \in \gx{\eta(N)}{U}{r}$  and $\beta : [r] \hookrightarrow N$.
    Let us now show that this $\tau$ is an $\eta$-downward functor.
    To prove \cref{eq:commutative_diagram} for some arbitrary but fixed $\alpha: M \hookrightarrow N$, it suffices that
    \begin{align*}
        \ind_\beta \tau \big(\ind_{\eta(\alpha)} G\big)
        &\overset{\eqref{eq:extend_I}}{=}\tau\big(\ind_{\eta(\beta)} \circ \ind_{\eta(\alpha)} G\big) \\
        &\overset{\eqref{eq:ind_comp}}{=}\tau \big(\ind_{\eta(\alpha) \circ \eta(\beta)} G\big) \\
        &\overset{\eqref{eq:functor_commutes}}{=} \tau \big(\ind_{\eta(\alpha\circ \beta)} G\big) \\
        &\overset{\eqref{eq:extend_I}}{=} \ind_{\alpha\circ \beta} \tau(G) \\
        &\overset{\eqref{eq:ind_comp}}{=}\ind_\beta \circ \ind_\alpha \tau(G)
    \end{align*}
    for every $\beta : [r] \hookrightarrow M$.
\end{proof}

By \cref{lemma:uppward_necessary_sufficient}, the following are examples of upward transformations for previously defined downward functors for $U = U' = \{0\}$ and $r = r' = 2$:

\begin{enumerate}
    \item  Given $\eta = \id \sqcup \const_S$ for some finite set $S$, the function $\tau$ mapping a graph $G \in \g{\eta(V)}$ to the graph in $\g{V}$ in which $\{u, v\}$ is an edge if and only if $\{u, v\}$, $\{u, i\}$, and $\{v, i\}$ are edges in $G$ for all $i \in S$ is an $\eta$-upward transformation. 
    \item \label{item:tensor_pwer_trick} Given $\eta = \id \times \const_S$ for some finite set $S$, the function $\tau$ mapping a graph $G \in \g{\eta(V)}$ to the graph in $\g{V}$ in which $\{u, v\}$ is an edge if and only if $\{(u, i), (v, i)\}$ is an edge in $G$ for every $i \in S$ is an $\eta$-upward transformation. 
    \item \label{item:path_P2_subdivision} Given $\eta = \id \sqcup \eta^{(2)}$, the function $\tau$ mapping a graph $G \in \g{\eta(V)}$ to the graph in $\g{V}$ in which $\{u, v\}$ is an edge if and only if the set $\big\{ u, \{u, v\}, v \}$ defines a path of length two from $u$ to $v$ in $G$  is an $\eta$-upward transformation.
\end{enumerate}

We can now define the order-preserving operator.
\begin{theorem} \label{thm:order_preserving_operator}
    For any given downward functor $\eta$ and $\eta$-upward transformation $\tau$, the operator $\llbracket \sdot \rrbracket_{(\eta,\tau)}$ defined by mapping any $G \in \mG_\simeq^{U', r'}$ to
    \begin{equation*}
        \llbracket G \rrbracket_{(\eta,\tau)} = \sum_{\substack{H\in\mG_{\eta(V_G)}\\ \tau(H) = G}} H
    \end{equation*}
    gives a well-defined order-preserving map $\mA^{U', r'} \to \mA^{U, r}$.
\end{theorem}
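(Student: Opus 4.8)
The plan is to verify, in this order, three things: that $\llbracket\sdot\rrbracket_{(\eta,\tau)}$ is well defined on the free vector space $\gcvsx{U'}{r'}$ (the sum $\sum_{\tau(H)=G}H$ does not depend on the representative chosen for the isomorphism class of $G$), that it descends to the quotients and so yields a map $\gax{U'}{r'}\to\gax{U}{r}$, and finally that this map is order preserving. Linearity is part of the definition, and once positivity is preserved, order preservation is immediate, since $f\le g$ means $g-f\ge 0$.

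First I would treat representative independence. If $\sigma$ is a bijection witnessing $G\simeq G'$, then by \cref{eq:functor_commutes} $\eta(\sigma)$ is a bijection with inverse $\eta(\sigma^{-1})$, so $\ind_{\eta(\sigma)}$ is a bijection $\gx{\eta(V_{G'})}{U}{r}\leftrightarrow\gx{\eta(V_{G})}{U}{r}$ which, being induced by a bijection of vertex sets, preserves isomorphism classes. By \cref{eq:commutative_diagram} applied to $\sigma$ we get $\tau(\ind_{\eta(\sigma)}H)=\ind_\sigma\tau(H)$, so this bijection restricts to one between $\{H:\tau(H)=G'\}$ and $\{H:\tau(H)=G\}$. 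Hence the two defining sums agree in $\gcvsx{U}{r}$, and $\llbracket\sdot\rrbracket_{(\eta,\tau)}$ extends linearly to $\gcvsx{U'}{r'}\to\gcvsx{U}{r}$.

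The crux is the descent. The kernel of $\gcvsx{U'}{r'}\to\gax{U'}{r'}$ is, as a vector space, spanned by the elements $H-\bullet\cdot H$ (since $f\cdot(G-\bullet\cdot G)=(fG)-\bullet\cdot(fG)$), so it suffices to show that $\llbracket\bullet\cdot H\rrbracket_{(\eta,\tau)}$ and $\llbracket H\rrbracket_{(\eta,\tau)}$ represent the same element of $\gax{U}{r}$ for every isomorphism class $H$. Write $V=V_H$, adjoin a fresh vertex $\ast$, and let $\iota:V\hookrightarrow V\sqcup\{\ast\}$ be the inclusion. By \cref{eq:graph_prod}, $\bullet\cdot H$ is the sum of all $H''\in\gx{V\sqcup\{\ast\}}{U'}{r'}$ with $H''[V]=H$, so
\[
    \llbracket\bullet\cdot H\rrbracket_{(\eta,\tau)}=\!\!\!\!\sum_{\substack{\tilde H\in\gx{\eta(V\sqcup\{\ast\})}{U}{r}\\ \tau(\tilde H)[V]=H}}\!\!\!\!\tilde H,
\]
and \cref{eq:commutative_diagram} applied to $\iota$ rewrites the condition $\tau(\tilde H)[V]=H$ as $\tau(\ind_{\eta(\iota)}\tilde H)=H$. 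Grouping the summands by the restriction $H':=\ind_{\eta(\iota)}\tilde H\in\gx{\eta(V)}{U}{r}$ to the $\eta(V)$-sized set $\eta(\iota)(\eta(V))$, the inner sum for a fixed $H'$ is the sum of all graphs on $\eta(V\sqcup\{\ast\})$ restricting to $H'$ there, which by \cref{eq:graph_prod} equals $H'\cdot\bullet^{d}$ with $d:=|\eta(V\sqcup\{\ast\})|-|\eta(V)|\ge 0$ (a number depending only on $|V|$). Summing over the $H'$ with $\tau(H')=H$ gives $\llbracket\bullet\cdot H\rrbracket_{(\eta,\tau)}=\llbracket H\rrbracket_{(\eta,\tau)}\cdot\bullet^{d}$ in $\gcvsx{U}{r}$, and since $\bullet\equiv\varnothing$ in $\gax{U}{r}$ this equals $\llbracket H\rrbracket_{(\eta,\tau)}$ there. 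Taking $H=\varnothing$ in particular gives $\llbracket\varnothing\rrbracket_{(\eta,\tau)}=\bullet^{|\eta(\varnothing)|}\equiv\varnothing$, so the induced map $\gax{U'}{r'}\to\gax{U}{r}$ is unital.

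Finally, for order preservation I would argue straight from the uniform-order description of positivity. Suppose $f\ge 0$ in $\gax{U'}{r'}$ and fix $\varepsilon>0$; choose a uniform-order representative $\sum_{G}c_{G}\,G$ of $f+\varepsilon/2$ with all $c_{G}>0$ and all $v_{G}=m$. Each $\llbracket G\rrbracket_{(\eta,\tau)}$ is by definition a nonnegative integer combination of isomorphism classes of order $|\eta([m])|$, so, by unitality, $\llbracket f\rrbracket_{(\eta,\tau)}+\varepsilon/2=\llbracket f+\varepsilon/2\rrbracket_{(\eta,\tau)}$ is represented by $\sum_{G}c_{G}\llbracket G\rrbracket_{(\eta,\tau)}$, a nonnegative combination of classes of that order. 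Adding $\tfrac{\varepsilon}{2}\bullet^{|\eta([m])|}$ — in whose support every isomorphism class of that order occurs with coefficient at least one — produces a uniform-order representative of $\llbracket f\rrbracket_{(\eta,\tau)}+\varepsilon$ with strictly positive coefficients; as $\varepsilon$ was arbitrary, $\llbracket f\rrbracket_{(\eta,\tau)}\ge 0$, and order preservation follows by linearity. The single non-routine point is the identity $\llbracket\bullet\cdot H\rrbracket_{(\eta,\tau)}=\llbracket H\rrbracket_{(\eta,\tau)}\cdot\bullet^{d}$: its content is that, once the induced subgraph on the $\eta(V)$-part is fixed, the remaining edges and vertex labels of a $\tau$-preimage vary completely freely, which is exactly what \cref{eq:commutative_diagram} together with the product \cref{eq:graph_prod} deliver.
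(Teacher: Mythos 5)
Your proof is correct and follows essentially the same route as the paper: the heart of both arguments is the identity $\llbracket \bullet\cdot H\rrbracket_{(\eta,\tau)}=\llbracket H\rrbracket_{(\eta,\tau)}\cdot\bullet^{\,|\eta(V\sqcup\{\ast\})|-|\eta(V)|}$, established via \cref{eq:commutative_diagram} applied to the inclusion, followed by the observation that nonnegative combinations go to nonnegative combinations. You additionally spell out two points the paper leaves implicit — independence of the chosen representative of the isomorphism class, and the $\varepsilon$-bookkeeping needed to turn "maps positive combinations to positive combinations" into positivity in the closure sense — which is careful but not a different method.
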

\begin{proof}
    Let $\alpha : V_G \hookrightarrow V_G\sqcup \{0\}$ denote the identity with extended range. Let us show that $\llbracket G \cdot \bullet\rrbracket_{(\eta,\tau)} = \llbracket G \rrbracket_{(\eta,\tau)} \cdot \bullet^{|\eta(V_G \sqcup \{0\})| - |\eta(V_G)|}$, where we note that both sides are linear combinations of graphs on vertex set $\eta(V_G \sqcup \{0\})$ and that $|\eta(V_G \sqcup \{0\})| \ge |\eta(V_G)|$ since $\eta$ maps injections to injections. For any $H \in \mG_{V_G \sqcup \{0\}}^{U', r'}$ the following chain of equivalences holds:
    \begin{align*}
        & \quad  H \text{ is a summand of } \llbracket G \cdot \bullet \rrbracket_{(\eta,\tau)} \\
        & \iff \tau(H)[\alpha] = G \\
        & \iff  \tau(H[\eta(\alpha)]) = G \\
        &  \iff  H[\eta(\alpha)] \text{ is a summand of }  \llbracket G \rrbracket_{(\eta,\tau)} \\
        & \iff  H \text{ is a summand of }  \llbracket G \rrbracket_{(\eta,\tau)} \cdot \bullet^{|\eta(V_G \sqcup \{0\})| - |\eta(V_G)|},
    \end{align*}
    where we have used \cref{eq:commutative_diagram} in the third step. This means that both sides have the same summands and are therefore equal. Since $\llbracket G \rrbracket_{(\eta,\tau)} \cdot \bullet^{|\eta(V_G \sqcup \{0\})| - |\eta(V_G)|} = \llbracket G \rrbracket_{(\eta,\tau)}$ in $\gax{U}{r}$, it follows that the operator is well-defined.
    The operator is also order-preserving, since it maps positive linear combinations of graphs to  positive linear combinations.
\end{proof}

Let us make a slightly more informal argument about these operators. While the frameworks due to Lov\'asz and Szegedy~\cite{lovasz2006limits} and Razvorov~\cite{Razborov_2007} are covered by the $\const_S \sqcup \id$ functor, general $\eta$ allow for {\lq}higher degree{\rq} operators. An important distinction is captured by the following remark.
\begin{remark} \label{rmk:multiplicative}
    Any operator $\llbracket \sdot \rrbracket_{(\eta,\tau)}$ coming from some $\eta = \eta^{(k)}$ with $k \ge 1$ is multiplicative, i.e., it is an algebra homomorphism, while in general it is not if $\const_S$ is a non-multiplicative part of $\eta$, e.g., if $\eta = \id \sqcup \const_S $ with $S \ne \emptyset$.
\end{remark}
This multiplicativity might give an explanation as to why higher degree operators have (to the best of our knowledge) not been previously formulated: in the context of sum-of-squares (SOS) approaches, $\id \sqcup \const_S $ is particularly useful as it can derive non-trivial positive elements from trivially positive squares in other algebras. In contrast, higher degree operators map squares to squares due to their multiplicativity, yielding only trivially positive elements. However, these operators prove valuable in context of the forcing conjecture since they preserve quasi-randomness. Note that the question of which flag algebra methods are applicable in which contexts is a subject studied on its own right~\cite{blekherman2020simple,garg2022non,raymond2018symmetry,gatermann2004symmetry}.

\section{Connecting Algebra to Combinatorial Operators} \label{sec:connect_operator_comb}

In practice we will only need specific pairs of downward functors and upward transformations that behave well with $\nind$, blowups, and subdivisions. Let us start with a motivating example by noting that the tensor power trick, which was already used by Sidorenko~\cite{Sidorenko_1993}, can be easily derived from the order-preserving property of $\llbracket \sdot \rrbracket_{(\eta, \tau)}$ by chosing $\eta = \id \times \const_S$ and letting $\tau$ map edges to perfect matchings (which fully determines it by \cref{lemma:uppward_necessary_sufficient}). It follows that
\begin{equation} \label{eq:tensor_power_trick}
    \llbracket \nind(G) \rrbracket_{(\eta, \tau)} = \nind (G)^{|S|}
\end{equation}
for any $G \in \gc$.
Suppose now there exists $G \in \gc$ and $c > 0$ such that $\nind(G) \ge c \, K_2^{e_G}$,
then applying the resulting operator and \cref{eq:tensor_power_trick} to this inequality, we get
$\nind(G) \ge c^{1/|S|} \, K_2^{e_G}$ and therefore, by letting $|S|$ tend to infinity, that in fact $\nind(G) \ge K_2^{e_G}$.

\medskip

By moving to higher degrees, we can now connect the operator to subdivisions.
Let us first give a generalized notion of the subdivision of a graph by another graph.
Let us assume that $U$ is a singleton for now.
We say a graph $F \in \gxx{V}{r'}$ is \emph{symmetric} with respect to $r$ disjoint ordered vertex sets $S_j = (s_{j,1}, \ldots, s_{j,m} )$ if $E_{F[S_j]}=\emptyset$ and for each permutation $\sigma$ of $\{ 1, \ldots, r \}$ there exists an isomorphism of $F$ mapping $s_{j,i}$ to $s_{\sigma_j, i}$ for all $1 \le i \le m$ and $1 \le j \le r$.
\begin{definition} \label{def:gen_subdiv}
    Given $F_v \in \gxx{[m]}{r}$ and $F_e \in \gxx{[r'] \times [m] \sqcup S'}{r}$ symmetric with respect to $S_j = \{j\} \times [m]$ for $1 \le j \le r'$,
    the subdivision $\sub (F_v, F_e, S_1, \ldots, S_{r'}; G)$ of a graph $G \in \gcxx{r'}$ is the $r$-uniform graph with vertex set $\big( V_G \times [m]\big)  \sqcup \big( E_G \times S' \big)$ obtained by replacing each vertex of $G$ with a copy of $F_v$ and each edge with a copy of $F_e$.
\end{definition}

Obviously we get the notion of subdivision from the introduction if $m = 1$ and the notion of blowup if $F_v = I_m$ and $F_e$ is the complete $r$-uniform and $r'$-partite graph.
We can even model the box product of a $2$-uniform graph with an edge by choosing $F_v = K_2$ and letting $F_e$ be two parallel edges, though when doing so there will be some additional nuances that will require the use of an additional vertex label in order to get the desired result, \cref{thm:box},  in its full strength.
Let us however first prove the following simple lemma telling us that we can capture the behavior of subdivisions by order-preserving algebra operators, which will be sufficient to prove \cref{thm:blowup}, \cref{thm:subdivision}, and \cref{thm:hypergraphs}.

\begin{lemma} \label{lem:tau_from_eta}
    There exists a downward functor $\eta$ and an $\eta$-upward transformation $\tau$ so that 
    \begin{equation}\label{eq:swap_operators}
        \llbracket \nind(G) \rrbracket_{(\eta,\tau)} = \nind \big( \sub (F_v, F_e, S_1, \ldots, S_{r'}; G) \big)
    \end{equation}
    for any $G \in \gcxx{r'}$ without isolated vertices.
\end{lemma}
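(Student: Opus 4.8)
The plan is to design the downward functor $\eta$ so that $\eta(V_G)$ is the vertex set of $\sub(F_v,F_e,S_1,\ldots,S_{r'};G)$ together with a bounded number of spurious vertices, and the upward transformation $\tau$ so that, for an $r$-uniform graph $H$ on $\eta(V_G)$, every edge of $G$ is an edge of $\tau(H)$ if and only if $H$ contains the subdivision on the relevant vertices; then \cref{eq:swap_operators} falls out of the definition of $\llbracket\sdot\rrbracket_{(\eta,\tau)}$ upon comparing its preimage count with the expansion \cref{eq:nind} of $\nind$. Writing $S'$ for the set appearing in \cref{def:gen_subdiv}, I would take
\[ \eta \;=\; \big(\id \times \const_{[m]}\big)\;\sqcup\;\big(\eta^{(r')}\times\const_{S'}\big), \]
so that $\eta(M)=\big(M\times[m]\big)\sqcup\big(M^{(r')}\times S'\big)$ and, for an injection $\alpha\colon M\hookrightarrow N$, the injection $\eta(\alpha)$ sends $(v,i)\mapsto(\alpha(v),i)$ and $(\{v_1,\ldots,v_{r'}\},s)\mapsto(\{\alpha(v_1),\ldots,\alpha(v_{r'})\},s)$. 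Being assembled from the functors $\eta^{(k)}$ via products and unions, $\eta$ is a downward functor. The key features are that an $r'$-element set $e$ satisfies $e^{(r')}=\{e\}$, so that $\eta(e)=\big(e\times[m]\big)\sqcup\big(\{e\}\times S'\big)$ is exactly the vertex set onto which the subdivision glues the copy of $F_e$ attached to $e$, and that $\bigcup_{e\in E_G}\eta(e)$ equals $\big(V_G\times[m]\big)\sqcup\big(E_G\times S'\big)$ — the vertex set of the subdivision — precisely because $G$ has no isolated vertex, so that $\bigcup_{e\in E_G}e=V_G$.

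Next I would build $\tau$, which maps $r$-uniform graphs on $\eta(N)$ to $r'$-uniform graphs on $N$. Let $F_e^{+}$ be the $r$-uniform graph on $[r']\times[m]\sqcup S'$ obtained from $F_e$ by additionally placing a copy of $F_v$ on each block $S_j=\{j\}\times[m]$ (via $i\mapsto(j,i)$); since $E_{F_e[S_j]}=\emptyset$ this is an honest union, and since $F_e$ is symmetric with respect to the $S_j$ and the inserted $F_v$-gadget is the same on every block, $F_e^{+}$ is invariant under the permutations of $\{1,\ldots,r'\}$ that permute the blocks. For $H\in\mG_{\eta(N)}$ I would declare an $r'$-set $e\subseteq N$ to be an edge of $\tau(H)$ exactly when the induced subgraph $H[\eta(e)]$ contains $F_e^{+}$ under some — and hence, by the block-invariance just noted, any — identification of $e$ with $[r']$. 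Since whether $e\in E_{\tau(H)}$ depends on $H$ only through $\ind_{\eta(\beta)}H$ for $\beta\colon[r']\hookrightarrow N$ with image $e$, the commuting square of \cref{def:commutative_diagram} reduces, via the functoriality \cref{eq:functor_commutes} of $\eta$, to the observation that $\ind_\alpha\tau(H)$ and $\tau(\ind_{\eta(\alpha)}H)$ declare an $r'$-set $e$ to be an edge by the same rule, namely whether $H$ restricted to $\eta(\alpha(e))$ contains $F_e^{+}$. Thus $\tau$ is an $\eta$-upward transformation — alternatively, one may define $\tau$ only on $\mG_{\eta([r])}$ and appeal to \cref{lemma:uppward_necessary_sufficient} — and by \cref{thm:order_preserving_operator} the operator $\llbracket\sdot\rrbracket_{(\eta,\tau)}$ is well defined and order-preserving.

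Finally I would verify \cref{eq:swap_operators} itself. Unwinding the definitions,
\[ \llbracket\nind(G)\rrbracket_{(\eta,\tau)} \;=\; \sum_{G\subseteq G'}\ \sum_{\substack{H\in\mG_{\eta(V_G)}\\ \tau(H)=G'}}H \;=\; \sum_{\substack{H\in\mG_{\eta(V_G)}\\ G\subseteq\tau(H)}}H, \]
the sum of all $r$-uniform $H$ on $\eta(V_G)$ for which every edge of $G$ is an edge of $\tau(H)$. By the definitions of $\tau$ and $F_e^{+}$, the condition ``$e\in E_{\tau(H)}$ for all $e\in E_G$'' says that $H$ contains the glued-in copy of $F_e$ along every edge $e$ of $G$ together with the $F_v$-gadget on $\{v\}\times[m]$ for every vertex $v$ incident to some edge; using that $G$ has no isolated vertex this last clause ranges over all of $V_G$, and — since distinct edges $e\neq e'$ share no $S'$-vertices and the copies of $F_e$ sitting on edges that overlap agree on the shared blocks by symmetry — the whole condition is exactly that $H$ restricted to $\big(V_G\times[m]\big)\sqcup\big(E_G\times S'\big)$ contains $\sub(F_v,F_e,S_1,\ldots,S_{r'};G)$. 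The remaining vertices of $\eta(V_G)$, namely $\big(V_G^{(r')}\setminus E_G\big)\times S'$, are never inspected by $\tau$ at any edge of $G$, so summing over all their possible incidences contributes only a factor of $\bullet$ raised to their number; using $F=\bullet\cdot F$ in $\gaxx{r}$ together with \cref{eq:nind} and \cref{eq:graph_prod}, the displayed sum equals $\nind\big(\sub(F_v,F_e,S_1,\ldots,S_{r'};G)\big)$, as required.

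The step I expect to be the main obstacle is pinning down $\tau$ so that it is simultaneously a genuine $\eta$-upward transformation and has preimages that, after the $\bullet$-collapse of the ``phantom'' $S'$-vertices attached to the non-edges of $G$, reproduce $\nind$ of the subdivision exactly. In particular, one has to see that the $F_v$-gadgets are forced by the $F_e$-patterns — which is precisely what the no-isolated-vertices hypothesis provides — and that the symmetry of $F_e$ with respect to the blocks $S_j$ is strong enough both to make the edge test defining $\tau$ well defined and to make the copies of $F_e$ on overlapping edges of $G$ agree on their common vertices.
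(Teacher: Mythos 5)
Your proposal is correct and follows essentially the same route as the paper: the identical functor $\eta = (\id\times\const_{[m]})\sqcup(\eta^{(r')}\times\const_{S'})$, the same gadget ($F_e$ with copies of $F_v$ glued into the blocks $S_j$, which the paper calls $F$ and you call $F_e^{+}$) defining $\tau$ on edges, symmetry of $F_e$ guaranteeing well-definedness, and the observation that the unconstrained vertices attached to non-edges of $G$ collapse via $\mK$. Your verification of \cref{eq:swap_operators} and of the role of the no-isolated-vertices hypothesis is simply more explicit than the paper's.
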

\begin{proof}
    Recall that $\sub (F_v, F_e, S_1, \ldots, S_{r'}; G) \in \gxx{V}{r}$ where $V = \big( V_G \times [m]\big)  \sqcup \big( E_G \times S' \big)$.
    Let the downward functor therefore be given by
    \begin{equation*}
        \eta = \big( \id \times \const_{[m]} \big) \sqcup \big( \eta^{(r')} \times \const_{S'} \big).
    \end{equation*}
    By \cref{lemma:uppward_necessary_sufficient} it suffices to determine how $\tau$ maps any element in $\gxx{\eta([r'])}{r}$ to an element in $\gxx{[r']}{r'}$, i.e., an edge or non-edge, in such a way that \cref{eq:commutative_diagram} is fulfilled for $M = N = [r']$.
    Let $F$ denote the graph obtained by placing copies of $F_v$ into each of the sets $S_j$ in $F_e$, where we identify the vertices $[m]$ of $F_v$ with the veritces $\{j\} \times [m]$ in $F_e$ in the natural way.
    We now let $\tau(H)$ be an edge for any $H \in \gxx{\eta([r'])}{r}$ if and only if $E_{F} \subseteq E_H$, where we are using the natural way of identifying vertices in $\eta([r'])$ with $V_{F}$.
    \cref{eq:commutative_diagram} is fulfilled since $F_e$ is symmetric.
    It suffices now to check that \cref{eq:swap_operators} holds for edges and non-edges, which can easily be verified.
    Note that any isolated vertices added by $ \eta^{(r')} \times \const_{S'}$ disappear in $\mK$ since the equality is in $\gaxx{r}$.
\end{proof}

Note that the statement no longer holds once $G$ is allowed to have isolated vertices and $F_v$ is not an independent set:
the combinatorial subdivision would add copies of $F_v$ into these isolated vertices while the algebraic operator is agnostic to the isolated vertices in the argument as they disappear in $\mK$.
This is usually to our detriment as the isolated vertices can for example be used to decrease the exponent of the edges in the inequality expressing Sidorenko's property.
This also illustrates that a more general version of \cref{lem:tau_from_eta} in which $G$ is allowed to have an arbitrary number of isolated vertices \textit{cannot} hold, even if we  modified the underlying notions of the algebra and operator, as this would allow us to arbitrarily improve any given statement.

However, isolated vertices \textit{can} be used under certain conditions by adding a (seemingly) unused {\lq}dump{\rq} label $\ell \notin U$ and viewing a given element in $\gcvsx{U}{r}$ (not in $\gax{U}{r}$) it as an element of $\gax{U_\ell}{r}$, where $U_\ell = U \cup \{\ell\}$, while preserving the fact that the corresponding equivalence classes are positive.
Let us no longer restrict $U$ to be a singleton and let $f^\uparrow$ denote the equivalence class in  $\gax{U_\ell}{r}$ of the natural embedding of any given $f \in \gcvsx{U}{r}$ in $\gcvsx{U_\ell}{r}$.
Note that different representatives in $\gcvsx{U}{r}$ of the same equivalence class in $\gax{U}{r}$ can result in different equivalence classes in $\gax{U_\ell}{r}$ when considering this natural embedding;
even more importantly, a positive element $\gax{U}{r}$ can have a representatives giving both positive and non-positive images in $\gax{U_\ell}{r}$ under the natural embedding. 
As an example, let $U = \{0\}$, and consider Goodman~\cite{Goodman_1959} famous result that can be stated as $K_3 + I_3 \ge \nicefrac{1}{4}$ or, more specifically, that $K_3 + I_3 - \nicefrac{1}{4}$ is positive in $\gaxx{2}$:
the obvious image $(K_3 + I_3 - \nicefrac{1}{4})^\uparrow$ cannot be positive, just consider any convergent sequence only using the new label $\ell$, while $(\nicefrac{3}{4} \, K_3 - \nicefrac{3}{4} \,P_2 - \nicefrac{3}{4} \,P_2^c + \nicefrac{3}{4} \,I_3)^\uparrow$, obtained by substituting $\varnothing = K_3 + 3 \,P_2 + 3 \,P_2^c + I_3$, is positive.
This example however also illustrates that we can find relevant representatives whose images are positive by expressing the element with a uniform order. This will use to prove \cref{thm:box} and to discuss our efforts on the M\"obius ladder $M_5$ in \cref{sec:concluding}.
\begin{lemma} \label{prop:localization}
    If $f \in \gax{U}{r}$ is positive and $f_0$ a uniform order representative, then (the equivalence class of) $f_0^\uparrow$ in $\gax{U_\ell}{r}$ is also positive.
\end{lemma}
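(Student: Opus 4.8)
The plan is to reduce positivity of $f_0^\uparrow$ to positivity of $f$ via \cref{prop:order_comb_interpr}, by showing that every convergent sequence of graphs in $\gcx{U_\ell}{r}$ can be ``pushed'' to a convergent sequence in $\gcx{U}{r}$ on which $\lim_n \inj(f_0^\uparrow, \sdot)$ agrees with $\lim_n \inj(f, \sdot)$. The key observation is that $f_0$ is of \emph{uniform order}, say $v_{f_0} = k$: by \cref{eq:coeff_from_inj} every coefficient of $f_0$ is recovered from $\inj(\sdot, G)$ for graphs $G$ of order exactly $k$, and crucially the natural embedding into $\gcvsx{U_\ell}{r}$ does not introduce any graph using the label $\ell$. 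So $\inj(f_0^\uparrow, H)$ for $H \in \gcx{U_\ell}{r}$ only ever counts injections $\alpha : [k] \hookrightarrow V_H$ whose image avoids every vertex labeled $\ell$ (since any such $\alpha$ induces a graph with an $\ell$-labeled vertex, which cannot be a summand of $f_0^\uparrow$).

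Here is the main step. Fix a convergent sequence $(H_n)_{n \in \NN}$ in $\gcx{U_\ell}{r}$; I want to compute $\lim_n \inj(f_0^\uparrow, H_n)$. Let $H_n'$ be the $U$-labeled graph obtained from $H_n$ by deleting all vertices labeled $\ell$. If $v_{H_n'} \to \infty$ along a subsequence, pass to a further convergent subsequence $(H_{n}')$ in $\gcx{U}{r}$; then for large $n$,
\begin{equation*}
    \inj(f_0^\uparrow, H_n) = \frac{\#\{\alpha : [k] \hookrightarrow V_{H_n} \mid \ind_\alpha H_n \text{ a summand of } f_0^\uparrow,\ \operatorname{im}\alpha \cap L_{H_n}^{-1}(\ell) = \emptyset\}}{v_{H_n}^{\underline{k}}} = \inj(f_0, H_n') \cdot \frac{(v_{H_n'})^{\underline{k}}}{v_{H_n}^{\underline{k}}}.
\end{equation*}
The ratio $(v_{H_n'})^{\underline k} / v_{H_n}^{\underline k}$ is at most $1$, so $\lim_n \inj(f_0^\uparrow, H_n) \le \lim_n \inj(f_0, H_n') = \lim_n \inj(f, H_n') \ge 0$ — wait, the inequality goes the wrong way for a lower bound. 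The fix: the ratio is nonnegative and bounded, so $\lim_n \inj(f_0^\uparrow, H_n) = \lambda \cdot \lim_n \inj(f, H_n')$ where $\lambda = \lim_n (v_{H_n'})^{\underline k}/v_{H_n}^{\underline k} \ge 0$ (pass to a subsequence so this limit exists). Since $f$ is positive, \cref{prop:order_comb_interpr} gives $\lim_n \inj(f, H_n') \ge 0$, hence $\lim_n \inj(f_0^\uparrow, H_n) \ge 0$. If instead $v_{H_n'}$ stays bounded, then $(v_{H_n'})^{\underline k}/v_{H_n}^{\underline k} \to 0$ (as $k \ge 1$; if $k = 0$ then $f_0$ is a constant and the statement is trivial), so $\lim_n \inj(f_0^\uparrow, H_n) = 0 \ge 0$. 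Either way the limit is nonnegative, and since the convergent sequence was arbitrary, \cref{prop:order_comb_interpr} yields that $f_0^\uparrow$ is positive in $\gax{U_\ell}{r}$.

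The step I expect to be the main obstacle — or at least the one requiring the most care — is the bookkeeping in the displayed identity: one must argue cleanly that a summand of $f_0^\uparrow$ is \emph{exactly} a summand $G$ of $f_0$ viewed in $\gcx{U_\ell}{r}$ (no $\ell$ appears), so that an injection $\alpha : [k] \hookrightarrow V_{H_n}$ contributes to $\inj(f_0^\uparrow, H_n)$ if and only if it avoids $L_{H_n}^{-1}(\ell)$ \emph{and} the induced $U$-labeled graph $\ind_\alpha H_n = \ind_{\restrict{\alpha}{}}{} H_n'$ is a summand of $f_0$ with the matching coefficient. One should also note explicitly that the argument genuinely uses that $f_0$ is a \emph{uniform order} representative: a non-uniform (or the ``naive'') representative could contain summands with vertices that, after embedding, interact with $\ell$-labeled vertices in $H_n$ in a way that changes the count — this is precisely the Goodman example in the text, and the uniform-order hypothesis is what rules it out.
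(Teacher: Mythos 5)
Your proof is correct and takes essentially the same approach as the paper's: both delete the $\ell$-labeled vertices to form $H_n'$ (the paper's $H_n^\downarrow$), use the identity $\inj(f_0^\uparrow,H_n)=\inj(f_0,H_n')\cdot (v_{H_n'})^{\underline k}/v_{H_n}^{\underline k}$ (the paper writes the ratio as $\inj\big((\bullet^\uparrow)^{k},H_n\big)$), pass to convergent subsequences, and treat the degenerate case of a vanishing ratio separately. One minor caveat: your closing explanation of why uniform order is needed is slightly off --- the point is not that summands ``interact'' with $\ell$-labeled vertices, but that the correction factor $(v_{H_n'})^{\underline{v_{G_j}}}/v_{H_n}^{\underline{v_{G_j}}}$ depends on $v_{G_j}$, so summands of different orders would be rescaled by different amounts (in the Goodman example the constant term is not rescaled at all), which is exactly what the uniform-order hypothesis rules out.
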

\begin{proof}
    Let $f_0 = \sum_j c_j G_j$ with $v_{G_j} = v_{f_0}$ for all $j$ and write $v = v_{f_0}$.
    Let us write $H^\downarrow$ for the element in $\gcx{U}{r}$ obtained by removing all vertices with label $\ell$ from some graph $H \in \gcx{U_\ell}{r}$.
    Let $(H_n)_{n \in \NN}$ be a convergent sequence in $\mathcal G^{U_\ell, r}$.
    By \cref{prop:order_comb_interpr} we have to show that $\lim_n \inj(f_0^\uparrow, H_n)  \ge 0$.
    If $(H_n^\downarrow)_{n \in \NN}$ is not convergent, either the graphs $H_n$ contain only some finite number of vertices not labeled with $\ell$, in which case obviously $\inj(f_0^\uparrow, H_n) = 0$, or we can take an arbitrary but fixed convergent subsequence.
    We know that $\lim_n \inj (f_0, H_n^\downarrow) \ge 0$ since $f$ is positive.
    We also have $\# \{\alpha: V_{G_j} \hookrightarrow n \mid \ind_\alpha(H_n^\downarrow) = G_j \} = \# \{\alpha: V_{G_j} \hookrightarrow n \mid \ind_\alpha(H_n) = G_j \}$ as well as $\inj \big( (\bullet^\uparrow)^v, H_n \big) = v_{H_n^\downarrow}^{\underline{v}} / v_{H_n}^{\underline{v}}$ for $n$ large enough.
    If $\inj\big((\bullet^\uparrow)^v, H_n\big) = 0$ then again obviously $\inj(f_0^\uparrow, H_n) = 0$.
    If however $\inj\big((\bullet^\uparrow)^v, H_n\big) > 0$, then it follows that
    \begin{align*}
        0 & \le \lim_n \inj(f, H_n^\downarrow)
        \\ & =
        \lim_n \sum_j c_j  \, \frac{
            \# \{\alpha: V_{G_j} \hookrightarrow V_{H_n^\downarrow} \mid \ind_\alpha(H_n^\downarrow) = G_j \}
        }{
            v_{H_n^\downarrow}^{\underline{v_{G_j}}}
        } 
        \\ & =
        \lim_n \sum_j c_j  \, \frac{
            \# \{\alpha: V_{G_j} \hookrightarrow n \mid \ind_\alpha(H_n) = G_j \}
        }{
            v_{H_n}^{\underline{v_{G_j}}}
        } \, \frac{
            v_{H_n}^{\underline{v_{G_j}}}
        } {
            v_{H_n^\downarrow}^{\underline{v_{G_j}}}
        }
        \\ & =
        \lim_n \sum_j c_j \, \inj(G_j, H_n)
        / \inj\big( (\bullet^\uparrow)^{v_{G_j}}, H_n \big)
        \\ & =
        \lim_n \inj(f_0^\uparrow, H_n) / \lim_n\inj(\bullet^\uparrow, H_n)^{v}
    \end{align*}
    and therefore $\lim_n \inj(f_0^\uparrow, H_n)  \ge 0$, where we have used the fact that all $G_j$ are of the same order in the last step.
    Since $(H_n)_{n \in \NN}$ was assumed to be an arbitrary convergent sequence, it follows that the equivalence class of $f_0^\uparrow$ is positive.
\end{proof}
Note that Razborov already proved the same result in a more abstract setting and therefore with a proof that is significantly more difficult to follow, see~\cite[Theorem 2.6]{Razborov_2007}.
The important aspect of using the dump label is that it allows us to map any $F \supseteq F_v$ to label $0$ and everything else to label $\ell$.

\begin{lemma} \label{lem:tau_from_eta_special}
    There exists a downward functor $\eta$ and an $\eta$-upward transformation $\tau$ so that 
    \begin{equation}\label{eq:swap_operators_vertex_colors}
        \llbracket \nind(G)^\uparrow \rrbracket_{(\eta,\tau)} = \nind \big( \sub (F_v, F_e, S_1, \ldots, S_{r'}; G) \big)
    \end{equation}
    for any $G \in \gcxx{r'}$ where $ \llbracket \sdot \rrbracket_{(\eta,\tau)} : \gax{U_\ell}{r} \to \gaxx{r}$, $U$ is a singleton, and $U_\ell = \{0, \ell\}$ with $\ell \not \in U$.
    Note that, since $\bullet_0 \ne \bullet$ in $\ga^{U_\ell}{r}$, isolated vertices with label $0$ now no longer disappear in $\mK$.
\end{lemma}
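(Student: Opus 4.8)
The plan is to follow the template of the proof of \cref{lem:tau_from_eta}, keeping the same downward functor $\eta = \big(\id \times \const_{[m]}\big) \sqcup \big(\eta^{(r')} \times \const_{S'}\big)$ and the same auxiliary graph $F$ (obtained by gluing a copy of $F_v$ into each set $S_j = \{j\}\times[m]$ of $F_e$), but enriching the upward transformation so that its output also records, via the vertex labels, which vertex-blocks already contain a copy of $F_v$. By \cref{lemma:uppward_necessary_sufficient} it suffices to describe $\tau$ on $\gxx{\eta([r'])}{r}$ and to check \cref{eq:commutative_diagram} for $M = N = [r']$. Identifying $\eta([r'])$ with $V_F$ in the natural way, I would let $\tau(H)$ be the $U_\ell$-labeled graph on $[r']$ in which vertex $j$ carries label $0$ if $F_v \subseteq H[\{j\}\times[m]]$ and label $\ell$ otherwise, and in which $[r']$ is an edge if and only if $E_F \subseteq E_H$.

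First I would verify that $\tau$ is an $\eta$-upward transformation. For a permutation $\sigma$ of $[r']$ the injection $\eta(\sigma)$ sends $(v,i)\mapsto(\sigma(v),i)$ on the vertex-blocks and fixes the $S'$-part, so it identifies $(\ind_{\eta(\sigma)}H)[\{j\}\times[m]]$ with $H[\{\sigma(j)\}\times[m]]$ coordinatewise; hence vertex $j$ of $\tau(\ind_{\eta(\sigma)}H)$ gets label $0$ exactly when vertex $\sigma(j)$ of $\tau(H)$ does, which is the label part of \cref{eq:commutative_diagram}. The edge part is literally the check already carried out in \cref{lem:tau_from_eta} and uses only the symmetry of $F_e$. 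Once $\tau$ is in place, \cref{thm:order_preserving_operator} provides the well-defined order-preserving operator $\llbracket\sdot\rrbracket_{(\eta,\tau)}$.

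The heart of the argument is then to expand $\llbracket \nind(G)^\uparrow \rrbracket_{(\eta,\tau)}$. Writing $\nind(G)^\uparrow = \sum_{H'\supseteq G}(H')^\uparrow$, where $H'$ ranges over the graphs on $V_G$ containing $G$ and $(H')^\uparrow$ is $H'$ with all vertices labeled $0$, the operator becomes $\sum_H H$ over the $H\in\gxx{\eta(V_G)}{r}$ with $\tau(H)=(H')^\uparrow$ for some such $H'$. By the extension rule \cref{eq:extend_I}, $\tau(H)$ is the graph on $V_G$ whose vertex $v$ carries label $0$ iff the block $\{v\}\times[m]$ of $H$ contains $F_v$, and in which an $r'$-set $e$ is an edge iff the copy of $F$ placed on the blocks of $e$ together with $\{e\}\times S'$ lies inside $H$. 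Thus $\tau(H)=(H')^\uparrow$ for some $H'\supseteq G$ if and only if \emph{every} vertex-block of $H$ contains $F_v$ (this is the all-$0$-labels condition) and for every edge $e$ of $G$ the copy of $F$ attached to $e$ lies in $H$; together these say precisely $\sub(F_v, F_e, S_1,\ldots,S_{r'}; G)\subseteq H$. The all-$0$ requirement is exactly what now forces $F_v$ onto the blocks of the isolated vertices of $G$, which the edge conditions alone miss --- this is the single place where the statement improves on \cref{lem:tau_from_eta}. Summing gives $\llbracket \nind(G)^\uparrow \rrbracket_{(\eta,\tau)} = \sum_{H\supseteq\sub(\ldots;G)}H = \nind\big(\sub(F_v,F_e,S_1,\ldots,S_{r'};G)\big)$, where the vertices produced by the non-edge $r'$-subsets in the $\eta^{(r')}\times\const_{S'}$ part remain isolated and vanish in $\mK$ because the target algebra is still unlabeled, exactly as in \cref{lem:tau_from_eta}.

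I expect the only genuinely delicate point to be the bookkeeping in this last step: one has to be sure that the label-$\ell$ alternative really does remove every $H$ in which some vertex-block misses $F_v$, so that no merely partial subdivision leaks into the sum, and that passing from $\gcvsxx{r'}$ to the algebra with the dump label via $f\mapsto f^\uparrow$ does not quietly identify graphs on $V_G$ that differ only in isolated vertices --- it does not, which is precisely the content of the closing remark that $\bullet_0\neq\bullet$ there. No new positivity input is needed, since order-preservation is supplied by \cref{thm:order_preserving_operator}.
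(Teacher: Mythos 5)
Your proposal is correct and follows essentially the same route as the paper: same downward functor, and an upward transformation whose vertex labels record the presence of $F_v$ in each block, with the all-label-$0$ requirement coming from $\nind(G)^\uparrow$ forcing $F_v$ onto the blocks of isolated vertices. The only (immaterial) difference is that the paper declares $\tau(H)$ an edge when $E_{F_e}\subseteq E_H$ rather than $E_F\subseteq E_H$; since the label-$0$ condition already guarantees the $F_v$ copies, the two choices agree on every preimage of $(H')^\uparrow$, so your verification of \cref{eq:swap_operators_vertex_colors} goes through just as the paper's does.
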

\begin{proof}
    Recall that $\sub (F_v, F_e, S_1, \ldots, S_{r'}; G) \in \gxx{V}{r}$ where $V = \big( V_G \times [m]\big)  \sqcup \big( E_G \times S' \big)$.
    Just as in the proof of \cref{lem:tau_from_eta} we let the downward functor be given by
    \begin{equation*}
        \eta = \big( \id \times \const_{[m]} \big) \sqcup \big( \eta^{(r')} \times \const_{S'} \big).
    \end{equation*}
    By \cref{lemma:uppward_necessary_sufficient} it suffices to determine how $\tau$ maps any element in $\gxx{\eta([r'])}{r}$ to an element $\gx{[r']}{U'}{r'}$, i.e., an edge or non-edge with specific vertex labels, in such a way that \cref{eq:commutative_diagram} is fulfilled for $M = N = [r']$.
    Let $H \in \gxx{\eta([r'])}{r}$  be given. We let $\tau(H) \in \gxx{[r']}{r'}$ be an edge if and only if $E_{F_e}\subseteq E_H$. The vertex $i$ of $\tau(H)$ receives label $0$ if and only if $E_{F_v} \subseteq E_{\ind_{\eta(\alpha)}(H)}$ and label $\ell$ otherwise, where $\alpha : \{i\} \to [r'], \, i \mapsto i$.
    \cref{eq:commutative_diagram} is fulfilled since $F$ is symmetric.
    It suffices now to check that \cref{eq:swap_operators_vertex_colors} holds, which can again easily be verified.
\end{proof}

\section{Subdivisions - Proofs of \cref{thm:blowup} and \cref{thm:subdivision}.} \label{sec:proofs_blowup_subdivision}

We will in fact prove the following more general statement, that covers \cref{thm:subdivision} by setting $m = 1$ and \cref{thm:blowup} by choosing $F = K_{m,m}$.
\begin{theorem} \label{thm:gensubdivision}
    Let $G$ and $F$ be two Sidorenko graphs.
    If $F$ is symmetric with respect to ordered vertex sets $S$ and $ T$ and $E_{F[S]} = E_{F[T]} = \emptyset$, then $\sub(I_m, F, S, T; G)$ is Sidorenko.
    If $F$ is additionally forcing, then so is the subdivision.
\end{theorem}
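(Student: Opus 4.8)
The plan is to push the inequality $\nind(G) \ge K_2^{e_G}$, valid by \cref{rem:sidorenko} since $G$ is Sidorenko, through the order-preserving operator supplied by \cref{lem:tau_from_eta} for $F_v = I_m$ and $F_e = F$ (with $r' = 2$, $S_1 = S$, $S_2 = T$). First I would reduce to the case where $G$ has no isolated vertices: deleting isolated vertices of $G$ affects neither its Sidorenko property nor — up to the addition of isolated vertices — the graph $H := \sub(I_m, F, S, T; G)$, and isolated vertices influence neither being Sidorenko nor being forcing. I would also record that $e_H = e_G e_F$: each of the $e_G$ edges of $G$ is replaced by a copy of $F$, and since $E_{F[S]} = E_{F[T]} = \emptyset$ these copies pairwise share no edges. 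Thus, again by \cref{rem:sidorenko}, the Sidorenko claim amounts to $\nind(H) \ge K_2^{e_G e_F}$.

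Let $\eta$ and $\tau$ be as in \cref{lem:tau_from_eta}, so $\llbracket \nind(G') \rrbracket_{(\eta,\tau)} = \nind(\sub(I_m, F, S, T; G'))$ for every $G'$ without isolated vertices. Applying the order-preserving operator $\llbracket \sdot \rrbracket_{(\eta,\tau)}$ (\cref{thm:order_preserving_operator}) to $\nind(G) \ge K_2^{e_G}$ gives $\nind(H) = \llbracket \nind(G) \rrbracket_{(\eta,\tau)} \ge \llbracket K_2^{e_G} \rrbracket_{(\eta,\tau)}$. The crux is to evaluate the right-hand side. A direct expansion of the product shows that $K_2^{e_G}$ equals $\nind(M)$, where $M$ is the perfect matching on $2e_G$ vertices; since $M$ has no isolated vertices, \cref{lem:tau_from_eta} applies once more and yields $\llbracket K_2^{e_G} \rrbracket_{(\eta,\tau)} = \nind(\sub(I_m, F, S, T; M))$. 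But $M$ is a disjoint union of $e_G$ pairwise disjoint edges, so its subdivision is a disjoint union of $e_G$ copies of $F$, and as $\nind$ sends disjoint unions to products this equals $\nind(F)^{e_G}$. Finally $F$ is Sidorenko, so $\nind(F) \ge K_2^{e_F}$; since the order on the algebra is compatible with multiplication and $\nind(F), K_2 \ge 0$, a telescoping factorization of $\nind(F)^{e_G} - (K_2^{e_F})^{e_G}$ gives $\nind(F)^{e_G} \ge K_2^{e_G e_F}$. Chaining $\nind(H) \ge \nind(F)^{e_G} \ge K_2^{e_G e_F}$ settles the Sidorenko part.

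For the forcing statement I would keep the two-step chain $\nind(H) \ge \nind(F)^{e_G} \ge K_2^{e_G e_F}$, which runs through the forcing graph $F$. Let $(H_n)_{n \in \NN}$ be a sequence of graphs of increasing order with $t(K_2, H_n) = \big(1 + o(1)\big)\, p$ and $t(H, H_n) = \big(1 + o(1)\big)\, p^{e_G e_F}$; it suffices to show that every subsequence has a further subsequence that is $p$-quasi-random. Passing to such a subsequence along which $\inj(\sdot, H_n)$ converges for every graph (equivalently, along which all densities $t(\sdot, H_n)$ converge) and using $\lim_n t(G', H_n) = \lim_n \inj(\nind(G'), H_n)$, the positive elements $\nind(H) - \nind(F)^{e_G}$ and $\nind(F)^{e_G} - K_2^{e_G e_F}$ evaluate — by \cref{prop:order_comb_interpr} and multiplicativity of $\lim_n \inj$ — to
\[
    p^{e_G e_F} = \lim_n t(H, H_n) \ge \big(\lim_n t(F, H_n)\big)^{e_G} \ge \big(\lim_n t(K_2, H_n)\big)^{e_G e_F} = p^{e_G e_F}.
\]
Equality throughout forces $\lim_n t(F, H_n) = p^{e_F}$ while $\lim_n t(K_2, H_n) = p$, so since $F$ is forcing the subsequence is $p$-quasi-random, completing the proof.

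The only genuinely new step is the identification $\llbracket K_2^{e_G} \rrbracket_{(\eta,\tau)} = \nind(F)^{e_G}$. One cannot obtain this from multiplicativity of the operator, because $\eta$ contains $\const$-factors so $\llbracket \sdot \rrbracket_{(\eta,\tau)}$ is not an algebra homomorphism; instead one recognizes the right-hand side $K_2^{e_G}$ of Sidorenko's inequality as $\nind$ of a matching and invokes \cref{lem:tau_from_eta} a second time. The remaining points — the reduction to $G$ without isolated vertices (necessary since \cref{lem:tau_from_eta} fails otherwise, but harmless here) and the passage from the algebraic order to convergence of the $t(\sdot, H_n)$ in the forcing argument — are standard.
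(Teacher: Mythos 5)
Your proof is correct and follows the same overall strategy as the paper: push $\nind(G) \ge \nind(K_2)^{e_G}$ through the order-preserving operator of \cref{lem:tau_from_eta}, identify the image of the right-hand side as $\nind(F)^{e_G}$, and finish with Sidorenko's inequality for $F$; for the forcing claim, trace equality back through the chain to get $\lim_n t(F,H_n) = p^{e_F}$ and invoke that $F$ is forcing. The one place you genuinely diverge is the evaluation of $\llbracket \nind(K_2)^{e_G} \rrbracket_{(\eta,\tau)}$. The paper pulls the power out by multiplicativity of the operator (\cref{rmk:multiplicative}) and then applies \cref{lem:tau_from_eta} to the single edge $K_2$; you instead observe that $K_2^{e_G} = \nind(M)$ for $M$ a perfect matching with $e_G$ edges and apply \cref{lem:tau_from_eta} to $M$, whose subdivision is a disjoint union of $e_G$ copies of $F$. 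Both routes work. However, your stated reason for rejecting the paper's route --- that the operator cannot be an algebra homomorphism because $\eta$ contains $\const$-factors --- misreads \cref{rmk:multiplicative}: the obstruction there is $\const_S$ occurring as a \emph{union summand} (as in $\id \sqcup \const_S$), not as a factor in a product such as $\id \times \const_{[m]}$ or $\eta^{(2)} \times \const_{S'}$; the operator used here is in fact multiplicative, and that is precisely what the paper exploits. Your matching argument has the virtue of not relying on the remark at all, and your explicit reduction to $G$ without isolated vertices (needed since \cref{lem:tau_from_eta} excludes them) together with the spelled-out subsequence argument for the forcing part is somewhat more careful than the paper's terse treatment.
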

\begin{proof}
    We know that $\nind (G) \ge \nind(K_2)^{e_G}$ and $\nind (F) \ge \nind(K_2)^{e_F}$ since $G$ and $F$ are Sidorenko, see \cref{rem:sidorenko}.
    We pick $\eta = ( \id \times \const_{[m]} ) \sqcup ( \eta^{(2)} \times \const_{V_F \setminus S \cup T} )$ and let $\tau$ be as given by \cref{lem:tau_from_eta}.
    Abbreviating $\sub(I_m, F, S, T; G)$ as $\sub (F; G)$ and using the fact that $\llbracket \sdot \rrbracket_{(\eta, \tau)}$ is order-preserving by \cref{thm:order_preserving_operator}, we have
    \begin{align*}
        \nind(\sub (F; G)) & \overset{\eqref{eq:swap_operators}}{=} \llbracket \nind(G) \rrbracket_{(\eta, \tau)}  \overset{\ref{thm:order_preserving_operator}}{\ge} \llbracket \nind{(K_2)}^{e_G} \rrbracket_{(\eta, \tau)} \overset{\ref{rmk:multiplicative}}{=} \llbracket \nind{(K_2)} \rrbracket_{(\eta, \tau)}^{e_G} \\
        & \overset{\eqref{eq:swap_operators}}{=} \nind(F)^{e_G}  \ge \nind(K_2)^{e_G \, e_F} = K_2^{e_{\sub(F; G)}}.
    \end{align*}
    Note that in the last equality we have used that $E_{F[S]} = E_{F[T]} = \emptyset$.
    It follows that $\sub (F; G)$ is Sidorenko, again by \cref{rem:sidorenko}.
    If $F$ is additionally forcing and we have equality in the above inequality chain, then $F$ forces $G$ to be quasi-random and hence $\sub(F; G)$ is also forcing.
\end{proof}

\section{Box Product – Proof of \cref{thm:box}} \label{sec:proof_box}

Let  $M  \in \gxx{[2] \times [2]}{2}$ denote the graph consisting of two parallel edges, that is
\begin{equation*}
    E_{M}=\{\{(1,1),(1,2)\},\{(2,1),(2,2)\}\},
\end{equation*}
and note that it is symmetric with respect to $S_1 = \big( (1,1),(2,1) \big)$ and $S_2 = \big( (1,2),(2,2)\big)$. 
We prove the following rephrased version of \cref{thm:box}. 

\begin{theorem} \label{thm:box_rephrased}
    If $G$ is a Sidorenko graph, then $\sub(K_2, F_e, S_1, S_2; G)$ is forcing.
\end{theorem}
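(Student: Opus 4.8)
The plan is to adapt the proof of \cref{thm:gensubdivision}, the new wrinkle being that $F_v = K_2$ is not an independent set. Using the operator of \cref{lem:tau_from_eta} on the Sidorenko inequality $\nind(G) \ge K_2^{e_G}$ would at best give $\nind(G\Box K_2) \ge \nind(C_4)^{e_G} \ge K_2^{4e_G}$, which is not enough: the true edge count is $e_{G\Box K_2} = 2e_G + v_G \le 4e_G$ (for $G$ without isolated vertices), so this does not even yield that $G \Box K_2$ is Sidorenko. The remedy is the dump-label operator of \cref{lem:tau_from_eta_special}, which via the label $0$ records that each vertex of $G$ must be blown up into a copy of $K_2$, thereby restoring the missing factor of $K_2^{v_G}$. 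Accordingly, let $\llbracket \sdot \rrbracket_{(\eta,\tau)} \colon \gax{\{0,\ell\}}{2} \to \gaxx{2}$ be the operator furnished by \cref{lem:tau_from_eta_special} for $F_v = K_2$ and $F_e = M$, after relabelling $M$ so that $S_1 = \{1\}\times[2]$, $S_2 = \{2\}\times[2]$ and $S' = \emptyset$; for this data $\eta = \id \times \const_{[2]}$ distributes over disjoint unions, hence $\llbracket \sdot \rrbracket_{(\eta,\tau)}$ is multiplicative (\cref{rmk:multiplicative}), and $\sub(K_2, M, S_1, S_2; G) = G \Box K_2$. We may assume $e_G \ge 1$.

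Since $G$ is Sidorenko, $\nind(G) - K_2^{e_G}$ is positive in $\gaxx{2}$ by \cref{rem:sidorenko}. Put $N = \max\{v_G, 2e_G\}$ and let $f_0 = \nind(G)\cdot\bullet^{N - v_G} - K_2^{e_G}\cdot\bullet^{N - 2e_G}$ be its uniform-order-$N$ representative. By \cref{prop:localization}, $f_0^\uparrow \ge 0$ in $\gax{\{0,\ell\}}{2}$, and since the natural embedding $f \mapsto f^\uparrow$ is multiplicative and sends $\bullet$ to the $0$-labelled vertex $\bullet_0$, this reads
\begin{equation*}
    \nind(G)^\uparrow \cdot (\bullet_0)^{N - v_G} \ \ge\ \Lambda^{e_G}\cdot(\bullet_0)^{N - 2e_G},
\end{equation*}
where $\Lambda$ denotes the edge on $[2]$ with both endpoints labelled $0$. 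From the definition of $\tau$ in \cref{lem:tau_from_eta_special} one reads off $\llbracket \bullet_0 \rrbracket_{(\eta,\tau)} = K_2$ (a $0$-labelled vertex must carry a copy of $F_v = K_2$), $\llbracket \Lambda \rrbracket_{(\eta,\tau)} = \nind(C_4)$ (a $0$-labelled edge forces all four edges of the cycle obtained by gluing the two $F_v$-copies onto $M$), and $\llbracket \nind(G)^\uparrow \rrbracket_{(\eta,\tau)} = \nind(G\Box K_2)$ by \cref{lem:tau_from_eta_special}. Applying the order-preserving (\cref{thm:order_preserving_operator}) and multiplicative operator $\llbracket \sdot \rrbracket_{(\eta,\tau)}$ to the displayed inequality gives, in $\gaxx{2}$,
\begin{equation} \label{eq:box_key}
    \nind(G\Box K_2)\cdot K_2^{N - v_G} \ \ge\ \nind(C_4)^{e_G}\cdot K_2^{N - 2e_G}.
\end{equation}

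Finally we cancel the powers of $K_2$ in \cref{eq:box_key}, which is impossible in $\gaxx{2}$ but becomes possible after evaluating. Applying $\lim_m \inj(\sdot, H_m')$ (multiplicative by \cref{eq:lim_mult}), where $H_m'$ denotes the $m$-fold blow-up of an arbitrary graph $H$, and using \cref{eq:t_nind}, we obtain
\begin{equation*}
    t(G\Box K_2, H)\cdot t(K_2, H)^{N - v_G} \ \ge\ t(C_4, H)^{e_G}\cdot t(K_2, H)^{N - 2e_G}
\end{equation*}
for every graph $H$. Now let $(H_n)_{n\in\NN}$ have strictly increasing order with $t(K_2, H_n) = (1 + o(1))p$ and $t(G\Box K_2, H_n) = (1 + o(1))p^{e_{G\Box K_2}}$. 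If $p = 0$, then $t(F, H_n) \le t(K_2, H_n) \to 0$ for every $F$ with an edge, so $(H_n)$ is $0$-quasi-random. If $p > 0$, then $t(K_2, H_n) > 0$ for large $n$; substituting into the displayed inequality and dividing by $t(K_2, H_n)^{N - 2e_G}$ yields $t(C_4, H_n)^{e_G} \le (1 + o(1))p^{4e_G}$, hence $t(C_4, H_n) \le (1 + o(1))p^4$, while $C_4$ being Sidorenko gives $t(C_4, H_n) \ge t(K_2, H_n)^4 = (1 + o(1))p^4$; thus $t(C_4, H_n) = (1 + o(1))p^4$, and since $C_4$ is forcing \cite{chung1989quasi} the sequence $(H_n)$ is $p$-quasi-random. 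Hence $G \Box K_2$ is forcing; feeding $t(C_4, H) \ge t(K_2, H)^4$ into the displayed inequality also shows $t(G\Box K_2, H) \ge t(K_2, H)^{e_{G\Box K_2}}$ for every $H$, so $G\Box K_2$ is in particular Sidorenko (recovering \cref{thm:box}).

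The main obstacle is the bookkeeping the dump label imposes: one must notice that the plain operator of \cref{lem:tau_from_eta} loses the correct exponent, that $f\mapsto f^\uparrow$ fails to preserve positivity so that the Sidorenko inequality must first be brought into uniform order via \cref{prop:localization}, and — crucially — that $\llbracket \bullet_0 \rrbracket_{(\eta,\tau)} = K_2$ rather than the unit $\bullet$, so the padding terms genuinely survive as powers of $K_2$ in \cref{eq:box_key} and can only be removed by passing to homomorphism densities at the end.
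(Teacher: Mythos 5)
Your proposal is correct and follows essentially the same route as the paper's proof: pad the Sidorenko inequality to uniform order, lift it via \cref{prop:localization}, apply the dump-label operator of \cref{lem:tau_from_eta_special} using $\llbracket \bullet_0 \rrbracket_{(\eta,\tau)} = K_2$ and $\llbracket \nind(K_2)^\uparrow \rrbracket_{(\eta,\tau)} = \nind(C_4)$, and finish with the fact that $C_4$ is forcing. You merely spell out a few steps the paper leaves implicit (the cancellation of the $K_2$ powers via homomorphism densities, the $p=0$ case, and the uniform order $N=\max\{v_G,2e_G\}$), which does not change the argument.
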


\begin{proof}
    We know $\nind(G) \ge \nind(K_2)^{e_G}$ since $G$ is Sidorenko, see \cref{rem:sidorenko}, and so we also know that $(\nind(G) \cdot \bullet^{2 \, e_G - v_G})^\uparrow \ge (\nind(K_2)^{e_G})^\uparrow$ by \cref{prop:localization}.
    Choosing $\tau$ and $\eta$ as given by \cref{lem:tau_from_eta_special} and noting that $\llbracket \bullet^\uparrow \rrbracket_{(\eta, \tau)} = K_2$, we have
    \begin{equation}\label{eq:box_product_edge}
        \begin{split}
        \nind(\sub(K_2, M, S_1, S_2; G)) \cdot K_2^{2 \, e_G - v _G} & \overset{\eqref{eq:swap_operators_vertex_colors}}{=} \llbracket ( \nind(G) \cdot {\bullet}^{2e_G - v_G} )^\uparrow \rrbracket_{(\eta, \tau)} \\
        & \overset{\ref{thm:order_preserving_operator}}{\ge} \llbracket (\nind(K_2)^{e_G}))^\uparrow \rrbracket_{(\eta, \tau)} \overset{\eqref{eq:swap_operators_vertex_colors}}{=} \nind(C_4)^{e_G} \ge K_2^{4\, e_G}.
        \end{split}
    \end{equation}
    It follows that $\nind(\sub(K_2, M, S_1, S_2; G)) \ge K_2^{2\, e_G + v_G}$ and $\sub(K_2, M, S_1, S_2; G)$ is therefore Sidorenko since  it has $2e_G + v_G$ edges.
    If equality holds in the above chain of inequalities, then by the fact that $C_4$ is forcing it also follows that $\sub(K_2, M, S_1, S_2; G)$ is forcing.
\end{proof}

\section{Forcing Pairs - Proof of \cref{thm:forcing_pairs}} \label{sec:proof_forcing_pairs}

The following lemma is at the core of the proof of \cref{thm:forcing_pairs}. It implies that we can establish a relatively wide family of forcing pairs simply by inferring some underlying operator from already existing specific results. Recall that a sequence of graphs $(H_n)_{n \in \NN}$ is $p$-quasi-random if $t(G,H_n) = (1+o(1)) \, p^{e_G}$ for every $G \in \gc$ and quasi-random if there exists some $0 \le p \le 1$ for which it is $p$-quasi-random. Likewise, an order-preserving algebra homomorphism $\phi:\ga \to \mathbb R$ is $p$-quasi-random if $\phi\big( \nind (G) \big)=p^{e_G}$ for every $G\in\gc$. It is quasi-random if there exists some $0 \le p \le 1$ for which it is $p$-quasi-random.

Lov\'asz and Szegedy~\cite{lovasz2006limits} as well as Razborov~\cite[Theorem 3.3]{Razborov_2007} showed that convergent sequences $(H_n)_{n \in \NN}$ and order-preserving algebra homomorphism $\phi: \ga \to \RR$ are in a one-to-one correspondence.
A pair of graphs $(F,G)$ is therefore a forcing pair if the fact that $\phi(F)=p^{e_F}$ and $\phi(G)=p^{e_G}$ for some order-preserving homomorphism $\phi:\gaxx{2} \to \mathbb R$ and $0 \le p \le 1$ implies that it is $p$-quasi-random.

\begin{theorem} \label{thm:posAlgHom_iff_convSequ}
    Every convergent sequence of graphs $(G_n)_{n \in \NN}$ induces an order-preserving algebra homomorphism from $\mA$ to $\mathbb R$ through the linear extension of $F \mapsto \lim_n \inj(F, G_n)$. Conversely, every such homomorphism is induced by a convergent sequence of graphs.
\end{theorem}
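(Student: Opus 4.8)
The plan is to prove both directions of \cref{thm:posAlgHom_iff_convSequ} using the machinery already set up. For the forward direction, given a convergent sequence $(G_n)_{n \in \NN}$, the map $F \mapsto \lim_n \inj(F, G_n)$ is well-defined on $\gc$ by the definition of convergence, extends linearly to $\gcvs$, vanishes on the ideal $\mK$ by the discussion preceding \cref{eq:lim_mult} (since $\lim_n \inj(\bullet, G_n) = 1$), and hence descends to a linear map $\phi : \ga \to \RR$. Multiplicativity is exactly \cref{eq:lim_mult}, and $\phi(\varnothing) = 1$, so $\phi$ is an algebra homomorphism. Order-preservation is the content of \cref{prop:order_comb_interpr}: if $f \ge 0$ in $\ga$ then $\lim_n \inj(f, G_n) \ge 0$, i.e.\ $\phi(f) \ge 0$. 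This direction is essentially bookkeeping over facts already established in \cref{sec:algebras}.

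The converse is the substantive part. Given an order-preserving algebra homomorphism $\phi : \ga \to \RR$, I want to produce a convergent sequence $(G_n)$ realizing it. The idea is: for each $n$, use positivity of $\phi$ to show that the ``profile'' $\big(\phi(G)\big)_{G \in \gc,\, v_G = n}$ behaves like the profile of an actual graph of a large order, up to vanishing error; then pick such a graph $G_n$ and argue the sequence converges to $\phi$. Concretely, I would first record the normalization facts: $\phi(G) \ge 0$ for every $G \in \gc$ (each $G$ is a positive element), and for each fixed $n$ the identity $\varnothing = \sum_{G \in \gc,\, v_G = n} G$ in $\ga$ (obtained by repeatedly applying $\varnothing = \bullet = \bullet\cdot\bullet = \cdots$ and expanding products via \cref{eq:graph_prod}) gives $\sum_{v_G = n} \phi(G) = 1$. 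So $\big(\phi(G)\big)_{v_G=n}$ is a probability distribution on $n$-vertex graphs for every $n$. The key compatibility is that these distributions are consistent under taking induced subgraphs: for $m < n$ and a fixed $H$ with $v_H = m$, one has the identity $\sum_{G : v_G = n,\ \inj(H,G)>0} c_{H,G}\, G \equiv$ (an expression whose $\phi$-value is) $\phi(H)$ — more precisely, averaging $\ind_\alpha$ over $\alpha : [m] \hookrightarrow [n]$ is the algebraic reflection of sampling, and this is the standard ``consistent random graph model'' $\Leftrightarrow$ graphon correspondence of Lov\'asz–Szegedy, or equivalently Razborov's Theorem 3.3.

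For the construction of the sequence itself I would argue as follows. Fix $n$. For a graph $W$ of large order $N$, the vector $\big(\inj(G, W)\big)_{v_G = n}$ lies in the simplex of distributions on $n$-vertex graphs, and as $N \to \infty$ along all graphs $W$ these vectors become dense (in fact all of the graphon-attainable region is approached) in the set of distributions arising from \emph{some} graphon. The order-preserving homomorphism $\phi$ restricted to uniform-order-$n$ elements is, by the Positivstellensatz-flavored argument in the proof of \cref{prop:order_comb_interpr}, a point in the closure of $\{(\inj(G,W))_{v_G=n} : W\}$: indeed if it were outside this closed convex set, a separating hyperplane would give a uniform-order-$n$ element $f$ with $\phi(f) < 0$ but $\inj(f, W) \ge 0$ for all $W$, contradicting that $f \ge 0$ in $\ga$ (by \cref{prop:order_comb_interpr}) forces $\phi(f) \ge 0$. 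Hence for each $n$ and each $\varepsilon_n = 1/n$ there is a graph $G_n$, which we may take of strictly increasing order, with $|\inj(G, G_n) - \phi(G)| < 1/n$ for all $G$ with $v_G \le n$. Then $\lim_n \inj(G, G_n) = \phi(G)$ for every $G \in \gc$, so $(G_n)$ is convergent and induces $\phi$.

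\textbf{Main obstacle.} The delicate step is the separation argument: making precise that an order-preserving homomorphism, restricted to each uniform-order layer, is a \emph{limit} of honest graph profiles — i.e.\ that positivity of $\phi$ on all of $\ga$ (not just on sums of graphs, but on the closed cone, including differences like the $I_n$-expansions) is exactly enough to land $\phi$ in the closure of the attainable region and not merely in some larger convex relaxation. This is where one must invoke compactness (every sequence of increasing order has a convergent subsequence, as noted after the definition of convergence) together with the hyperplane-separation/Positivstellensatz reasoning already used once in the proof of \cref{prop:order_comb_interpr}; alternatively one cites the Lov\'asz–Szegedy graphon correspondence or \cite[Theorem 3.3]{Razborov_2007} directly, of which this theorem is a restatement in the present notation.
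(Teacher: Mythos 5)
Your forward direction is essentially the paper's: well-definedness on $\mA$, multiplicativity from \cref{eq:lim_mult}, and order preservation from \cref{prop:order_comb_interpr}. The converse, however, contains a genuine gap, and it is precisely the step you flag as the ``main obstacle'': the separating-hyperplane argument cannot deliver what you need. Hyperplane separation only certifies that the order-$n$ profile $\big(\phi(G)\big)_{v_G=n}$ lies in the \emph{closed convex hull} of the attainable profiles $\big(\inj(G,W)\big)_{v_G=n}$, whereas you need it to lie in the closure of the attainable set itself — and that set is not convex. Already for $n=3$, the point $\tfrac12(1,0,0,1)$ in the $(K_3,P_2,P_2^c,I_3)$-coordinates is the average of the profiles of the complete and the empty graph, but any graph with vanishing induced $P_2$- and $P_2^c$-densities must be asymptotically complete or asymptotically empty, so this point is not the limit of any single graph sequence. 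Points of the convex hull outside the attainable closure correspond exactly to nontrivial \emph{mixtures} of graphons; these define perfectly good order-preserving linear functionals on each uniform-order layer, so positivity alone cannot exclude them. What excludes them is the multiplicativity of $\phi$, which your converse argument never uses.

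The paper closes this gap (following Coregliano--Razborov) probabilistically: since $\phi(\bullet^n)=1$ and $\phi(G)\ge 0$, the values $\phi(G)$ for $v_G=n$ form a probability distribution $\PP_{V_n}$ (as you also observe); one samples $\mathbf G_n\sim\PP_{V_n}$ independently and computes $\lim_n\EE[\inj(F,\mathbf G_n)^k]=\phi(F^k)=\phi(F)^k$, where the last equality is multiplicativity. By the method of moments — in particular the vanishing variance at $k=2$ — almost every realization of $(\mathbf G_n)_{n\in\NN}$ is a convergent sequence inducing $\phi$. To salvage your route you would have to inject multiplicativity into the separation step, e.g.\ by showing that any nontrivial decomposition of $\phi$'s profile into attainable profiles is incompatible with $\phi(F^2)=\phi(F)^2$; at that point you have reconstructed the second-moment argument. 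Your fallback of citing Razborov's Theorem 3.3 or the Lov\'asz--Szegedy correspondence is legitimate as a reference but does not constitute a proof of the statement.
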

\begin{proof}
    Let us prove the first statement. Note that $F \mapsto \lim_n \inj(F, G_n)$ can be linearly extended from
    $G_\simeq$ to $\mathbb{R}[G_\simeq]$ and that this extension is also well-defined on $\mA$ since for
    any graph $F$ with $v_F < v_{G_n}$, we have $\inj(\bullet \cdot F, G_n) =
    \inj(F, G_n)$, that is the map vanishes over $\mK$.
    The fact that this function is multiplicative follows from the fact that, asymptotically, the number of pairs of injections $V \hookrightarrow V_{G_n}$ and $W \hookrightarrow V_{G_n}$ whose images intersect in at least one point are negligible compared to the number of injections $V \sqcup W \hookrightarrow V_{G_n}$.
    Finally, the fact that it is order-preserving follows from the fact that elements from $\mG_\simeq$ are mapped to non-negative elements in $\RR$, so the same holds for positive linear combinations of elements $\mG_\simeq$.
    
    For the converse, we follow Coregliano and Razborov~\cite[Lemma 5.4]{CoreglianoRazborov_2020} and apply a probabilistic argument. Let $\phi:\mA\to\mathbb R$ be an order-preserving homomorphism, i.e., in particular $\phi(\bullet^n) = 1$ for any $n \geq 0$ and $\phi(G) \ge 0$ for every graph $G$. Since $\bullet^n$ represents the sum of all graphs (not up to isomorphism) on an $n$-vertex set, it follows that $\phi$ defines a probability measure $\mathbb P_V$ on $\mG_V(U)$ for every finite $V$. For a given sequence of sets $V_n$ of increasing order, we define a sequence of random variables $\mathbf{G}_n$ by independently sampling from $\mathbb P_{V_n}$. For every $F \in \mG_\simeq$ and $k \ge 0$, the limit of the $k$-th moment is given by
    \begin{align*}
        \lim_n\mathbb E[\inj(F, \mathbf G_n)^k] 
        &= \lim_n\mathbb E[\inj(F^k, \mathbf G_n)]\\
        &= \lim_n\mathbb E[\inj(\bullet^{|V_n| - k \, v_F} \cdot F^k, \mathbf G_n)]\\
        &= \lim_n \phi(\bullet^{|V_n|-k \, v_F} \cdot F^k)
        = \phi( F)^k.
    \end{align*}
    The first equality follows since asymptotically the $k$ injections will be disjoint. The second follows by again using that $\inj(\bullet \cdot F, G) =
     \inj(F, G)$. For the third inequality, we apply the definitions of homomorphism density, the graph product, and the probability distribution $\PP_{V_n}$, as well as linearity. For the last step, we again use the unit element, which removes the dependency on $n$, and then apply mulitplicativity of $\phi$. By the method of moments, almost all sequences attained by realizing $\mathbf G_n$ therefore induce $\phi$.
\end{proof}

\begin{lemma}
\label{rem:forcing_pairs}
    Let $F \in \gxx{[2]\times [m]}{r}$ be symmetric with respect to the $S_i=\{i\}\times [m]$ for $i=1,2$, $(G,H)$ a forcing pair, and $\eta$ and $\tau$ be as given by \cref{lem:tau_from_eta} for $\sub(I_m,F,S_1,S_2; \sdot)$. Then $(\sub(I_m,F,S_1,S_2; G), \;\sub(I_m,F,S_1,S_2; H))$ is a forcing pair if and only if there does not exist some not quasi-random order-preserving homomorphism $\phi:\gaxx{2} \to \mathbb R$ for which $\phi\circ \llbracket \sdot \rrbracket_{(\eta,\tau)}$ is quasi-random.
\end{lemma}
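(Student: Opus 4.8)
The statement is an "if and only if" whose real content is a translation between the combinatorial notion of a forcing pair and the algebraic notion of a quasi-random homomorphism, mediated by the operator $\llbracket \sdot \rrbracket_{(\eta,\tau)}$. The plan is to use \cref{thm:posAlgHom_iff_convSequ} to replace the statement "$(\sub(I_m,F,S_1,S_2; G),\sub(I_m,F,S_1,S_2; H))$ is a forcing pair" by the statement "every order-preserving homomorphism $\psi:\gaxx{2}\to\RR$ with $\psi(\nind(\sub(I_m,F,S_1,S_2;G)))=p^{e_{\sub(F;G)}}$ and $\psi(\nind(\sub(I_m,F,S_1,S_2;H)))=p^{e_{\sub(F;H)}}$ for some $p$ is quasi-random." This is legitimate because a sequence $(H_n)$ is $p$-quasi-random precisely when its induced homomorphism is $p$-quasi-random, and the correspondence of \cref{thm:posAlgHom_iff_convSequ} is bijective.

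First I would record the key identity: by \cref{lem:tau_from_eta}, $\llbracket \nind(G')\rrbracket_{(\eta,\tau)}=\nind(\sub(I_m,F,S_1,S_2;G'))$ for every graph $G'$ without isolated vertices (and $G,H$ may be taken without isolated vertices, since the forcing-pair property is insensitive to them). Hence for an order-preserving homomorphism $\psi:\gaxx{2}\to\RR$ the composite $\phi:=\psi\circ\llbracket\sdot\rrbracket_{(\eta,\tau)}$ is itself an order-preserving homomorphism $\gaxx{2}\to\RR$ — here I would invoke \cref{rmk:multiplicative}, which guarantees that $\llbracket\sdot\rrbracket_{(\eta,\tau)}$ is multiplicative because the relevant $\eta$ is built only from $\id\times\const$ and $\eta^{(r')}\times\const$ pieces, i.e.\ has no non-multiplicative $\const_S$ summand when $m$ is incorporated as $\id\times\const_{[m]}$ — and order-preservation is \cref{thm:order_preserving_operator}. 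Moreover $\phi(\nind(G))=\psi(\nind(\sub(F;G)))$ and $\phi(\nind(H))=\psi(\nind(\sub(F;H)))$. Now track the edge counts: $e_{\sub(F;G)}=e_G\cdot e_F$ when $E_{F[S]}=E_{F[T]}=\emptyset$ (the symmetry hypothesis on $F$), so $\psi(\nind(\sub(F;G)))=p^{e_F e_G}$ iff $\phi(\nind(G))=(p^{e_F})^{e_G}$, i.e.\ iff $\phi$ is $p^{e_F}$-quasi-random on the edge of $G$; likewise for $H$. Since $(G,H)$ is a forcing pair, $\phi(\nind(G))=q^{e_G}$ and $\phi(\nind(H))=q^{e_H}$ for $q=p^{e_F}$ forces $\phi$ to be $q$-quasi-random, i.e.\ $\phi=\psi\circ\llbracket\sdot\rrbracket_{(\eta,\tau)}$ is quasi-random.

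With this setup both directions fall out. For the "if" direction: assume no non-quasi-random $\phi$ has $\phi\circ\llbracket\sdot\rrbracket_{(\eta,\tau)}$ quasi-random; equivalently, whenever $\psi\circ\llbracket\sdot\rrbracket_{(\eta,\tau)}$ is quasi-random, $\psi$ itself is quasi-random. Given any $\psi$ witnessing the pair-condition for $(\sub(F;G),\sub(F;H))$, the paragraph above shows $\psi\circ\llbracket\sdot\rrbracket_{(\eta,\tau)}$ is quasi-random, hence $\psi$ is quasi-random; it remains only to check the density is the right $p$ — but $\psi(\nind(K_2))$ is pinned down by $\psi(\nind(\sub(F;G)))=p^{e_F e_G}$ together with $\psi$ being quasi-random, giving $\psi(\nind(K_2))=p$, so $(\sub(F;G),\sub(F;H))$ is a forcing pair. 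For the "only if" (contrapositive) direction: suppose there is a non-quasi-random $\phi:\gaxx{2}\to\RR$ with $\phi\circ\llbracket\sdot\rrbracket_{(\eta,\tau)}$ quasi-random, say $p$-quasi-random with density $p$; I need to manufacture a $\psi$ witnessing the pair-condition that is \emph{not} quasi-random. The natural candidate is $\psi:=\phi\circ\llbracket\sdot\rrbracket_{(\eta,\tau)}$ — wait, that is quasi-random by hypothesis, so instead I want $\psi$ realized by a sequence that pushes forward, under the combinatorial subdivision, to the behaviour of $\phi$; concretely, take the convergent sequence $(G_n)$ inducing $\phi$ (\cref{thm:posAlgHom_iff_convSequ}), form $\sub(I_m,F,S_1,S_2;G_n)$, pass to a convergent subsequence, and let $\psi$ be the induced homomorphism. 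The identity $\llbracket\nind(\sdot)\rrbracket_{(\eta,\tau)}=\nind(\sub(F;\sdot))$ shows $\psi(\nind(G'))=\phi(\llbracket\nind(G')\rrbracket)$ — no: it shows $\psi$ restricted to images of the subdivision matches $\phi\circ\llbracket\sdot\rrbracket$, which is quasi-random, so $\psi$ satisfies the pair-condition; and $\psi$ is not quasi-random because if it were, then for every graph $G'$ we would have $\psi(\nind(\sub(F;G')))=p^{e_F e_{G'}}$, forcing $\phi(\llbracket\nind(G')\rrbracket)=\phi(\nind(\sub(F;G')))$ — circular again. The clean fix: observe $\psi$ quasi-random would force $\phi$ quasi-random because $\phi$ can be recovered from $\psi$ via $\llbracket\sdot\rrbracket$ only on the sub-image, so one must instead argue that the sequence $(\sub(F;G_n))$ is quasi-random iff $(G_n)$ is, using that $F$ is forcing-\emph{free} here; since $F$ need not be forcing, this step is exactly where care is needed.

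\textbf{Main obstacle.} The delicate point is the "only if" direction: turning a non-quasi-random $\phi$ with $\phi\circ\llbracket\sdot\rrbracket_{(\eta,\tau)}$ quasi-random into a genuine non-quasi-random witness $\psi$ for the subdivided pair. One must show that the subdivision operation, applied at the level of convergent sequences/homomorphisms, does not accidentally "repair" quasi-randomness — i.e.\ that $(\sub(I_m,F,S_1,S_2;G_n))$ failing to be quasi-random is detectable, which follows because $G_n$ itself can be recovered (up to the combinatorial data that $\llbracket\sdot\rrbracket_{(\eta,\tau)}$ sees) from the subdivision, so any non-quasi-random feature of $\phi$ survives. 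Making this recovery precise — essentially that $\psi$ quasi-random implies $\psi\circ(\text{inclusion of }\gaxx{2}\text{-data})$, hence $\phi$, is quasi-random, contradicting the choice of $\phi$ — is the heart of the argument; everything else is bookkeeping with edge counts and the homomorphism correspondence of \cref{thm:posAlgHom_iff_convSequ}.
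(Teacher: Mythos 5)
Your ``if'' direction is essentially the paper's argument: given $\psi$ satisfying the pair condition for the subdivided graphs, the composite $\psi\circ\llbracket\sdot\rrbracket_{(\eta,\tau)}$ satisfies the pair condition for $(G,H)$ at density $p^{e_F}$, hence is quasi-random because $(G,H)$ is forcing, and the hypothesis of the lemma then forces $\psi$ itself to be quasi-random. That part is fine (including the edge-count bookkeeping $e_{\sub(F;G)}=e_F\,e_G$ and the observation that the composite is an order-preserving homomorphism by \cref{thm:order_preserving_operator} and \cref{rmk:multiplicative}).

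The ``only if'' direction, however, has a genuine gap, and the route you sketch to fill it is the wrong one. You do not need to manufacture a new homomorphism $\psi$ by subdividing the sequence that induces $\phi$ and then worry about whether the subdivision ``repairs'' quasi-randomness --- the witness is simply $\phi$ itself. If $\phi$ is not quasi-random but $\phi\circ\llbracket\sdot\rrbracket_{(\eta,\tau)}$ is $p_0$-quasi-random, then by \cref{lem:tau_from_eta}
\begin{equation*}
    \phi\big(\nind(\sub(F;G))\big)=\phi\big(\llbracket\nind(G)\rrbracket_{(\eta,\tau)}\big)=p_0^{e_G}=\big(p_0^{1/e_F}\big)^{e_{\sub(F;G)}},
\end{equation*}
and likewise for $H$; so $\phi$ satisfies the pair condition for $\big(\sub(F;G),\sub(F;H)\big)$ at density $p_0^{1/e_F}$ while failing to be quasi-random, which shows directly that the subdivided pair is not forcing. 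Your attempted construction (take $(G_n)$ inducing $\phi$, form $\sub(F;G_n)$, take the induced $\psi$) cannot work as you describe it: on the image of the subdivision that $\psi$ agrees with $\phi\circ\llbracket\sdot\rrbracket_{(\eta,\tau)}$, which is quasi-random by hypothesis, and outside that image you have no control --- this is exactly the circularity you ran into. The point you labelled the ``main obstacle'' therefore dissolves once you notice that the non-quasi-random homomorphism demanded by the definition of a (non-)forcing pair lives on $\gaxx{2}$ and may be taken to be $\phi$; no recovery of $G_n$ from its subdivision is needed anywhere in the proof.
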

\begin{proof}
    Let us abbreviate $\sub(F; \sdot) = \sub(I_m,F,S_1,S_2; \sdot)$.
    For the only if direction, we prove by contrapositive and assume there exists some not quasi-random order-preserving homomorphism $\phi:\gaxx{2} \to \mathbb R$, that is $\phi$ is not $p$-quasi-random for any $0 \leq p \leq 1$, for which $\phi\circ \llbracket \cdot\rrbracket_{(\eta,\tau)}$ is quasi-random, meaning there exists some $0 \leq p_0 \leq 1$ for which $\phi\circ \llbracket \cdot\rrbracket_{(\eta,\tau)}$ is $p_0$-quasi-random. It follow that
    \begin{equation*}
        \phi \big(\nind(\sub(F;G)) \big) \overset{\eqref{eq:swap_operators}}{=} \phi \big(\llbracket \nind(G) \big)\rrbracket_{(\eta,\tau)}) = p_0^{e_G} = \big( p_0^{1/e_F} \big)^{e_{\sub(F;G)}}
    \end{equation*}
    and likewise for $H$.
    Since $\phi$ is not $p_0^{1/e_F}$-quasi-random, it follows that $(\sub(F;G),\sub(F;H))$ cannot be forcing.

    For the if direction, let $\phi:\gaxx{2} \to \mathbb R$ be some order-preserving algebra homomorphism satisfying 
    \begin{equation*}
        \phi\big(\nind(\sub(F;G))\big) = p^{e_{\sub(F;G)}} \quad \text{and} \quad \phi\big(\nind(\sub(F;H))\big) = p^{e_{\sub(F;H)}}
    \end{equation*}
    for some $0 \le p \le 1$ and let us show that $\phi$ must be quasi-random.
    We have
    \begin{align*}
        \big(\phi\circ \llbracket \cdot\rrbracket_{(\eta,\tau)} \big)\big(\nind(G)\big) \overset{\eqref{eq:swap_operators}}{=} \phi\big(\nind(\sub(F; G))\big) = p^{e_{\sub(F; G)}} = (p^{e_{F}})^{e_{G}}
    \end{align*}
    and likewise for $H$.
    Since $(H,G)$ is forcing, it follows that $(\phi\circ \llbracket \cdot\rrbracket_{(\eta,\tau)}))$ is $p^{e_F}$-quasi-random, i.e., it is quasi-random. By assumptiom, $\phi$ therefore has to be quasi-random.
\end{proof}

\begin{corollary} \label{cor:forcing_pairs}
    If we know that both $(F,G)$ and $(\llbracket F \rrbracket_{\eta, \tau},\llbracket G \rrbracket_{\eta, \tau})$ are forcing pairs, then $\llbracket \sdot \rrbracket_{\eta, \tau}$ preserves forcing pairs in general. 
\end{corollary}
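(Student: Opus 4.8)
The plan is to read the corollary straight off the biconditional in \cref{rem:forcing_pairs}, whose whole point is that the condition ``the subdivision preserves this forcing pair'' is equivalent to a condition that does not mention the pair at all. Fix, once and for all, the downward functor $\eta$ and the $\eta$-upward transformation $\tau$ that \cref{lem:tau_from_eta} attaches to a subdivision $\sub(I_m,F_e,S_1,S_2;\sdot)$ with $F_e$ symmetric with respect to $S_1,S_2$, and recall the identification already built into the corollary's statement: for a $2$-uniform graph $K$ without isolated vertices, $\llbracket\nind(K)\rrbracket_{(\eta,\tau)}=\nind\bigl(\sub(I_m,F_e,S_1,S_2;K)\bigr)$, so that ``$\llbracket K\rrbracket_{(\eta,\tau)}$'' abbreviates $\sub(I_m,F_e,S_1,S_2;K)$. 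The key observation, which makes the corollary essentially immediate, is that the right-hand side of the equivalence in \cref{rem:forcing_pairs} --- \emph{there is no non-quasi-random order-preserving homomorphism $\phi:\gaxx{2}\to\RR$ for which $\phi\circ\llbracket\sdot\rrbracket_{(\eta,\tau)}$ is quasi-random} --- is a property of the operator $\llbracket\sdot\rrbracket_{(\eta,\tau)}$ alone, with no reference to the forcing pair being subdivided.

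First I would apply \cref{rem:forcing_pairs} to the forcing pair $(F,G)$ supplied by the hypothesis: since both $(F,G)$ and its image $\bigl(\llbracket F\rrbracket_{(\eta,\tau)},\llbracket G\rrbracket_{(\eta,\tau)}\bigr)=\bigl(\sub(I_m,F_e,S_1,S_2;F),\sub(I_m,F_e,S_1,S_2;G)\bigr)$ are forcing pairs, the forward direction of the biconditional yields that no such ``bad'' $\phi$ exists. Then I would run \cref{rem:forcing_pairs} in the opposite direction with an \emph{arbitrary} forcing pair $(G',H')$ in place of $(F,G)$: because the non-existence of a bad $\phi$ has just been established and is independent of $(G',H')$, the backward direction of the biconditional gives that $\bigl(\llbracket G'\rrbracket_{(\eta,\tau)},\llbracket H'\rrbracket_{(\eta,\tau)}\bigr)$ is again a forcing pair. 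Since $(G',H')$ was arbitrary, $\llbracket\sdot\rrbracket_{(\eta,\tau)}$ preserves forcing pairs.

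There is no analytic content left to supply here; all of the real work sits in \cref{rem:forcing_pairs} (and, behind it, \cref{thm:posAlgHom_iff_convSequ} and \cref{lem:tau_from_eta}). The remaining care is purely bookkeeping: one should note that a forcing pair may be taken to consist of graphs without isolated vertices --- adding or deleting isolated vertices changes neither $t(\sdot,H_n)$ nor the conclusion --- which is exactly the hypothesis under which \cref{lem:tau_from_eta}, and hence \cref{rem:forcing_pairs}, is stated; and one should make explicit, once, the identification of the algebra operator acting on $\nind(K)$ with the combinatorial subdivision of $K$, so that the phrase ``$(\llbracket F\rrbracket_{(\eta,\tau)},\llbracket G\rrbracket_{(\eta,\tau)})$ is a forcing pair'' is unambiguous. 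The only place where I would expect any obstacle at all is if one wanted the statement for an operator $\llbracket\sdot\rrbracket_{(\eta,\tau)}$ that does \emph{not} arise from a subdivision, since \cref{rem:forcing_pairs} is proved only in that setting; but as the corollary is stated, with $\eta,\tau$ coming from \cref{lem:tau_from_eta}, nothing further is needed.
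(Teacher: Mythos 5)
Your argument is exactly the intended one: the paper gives no separate proof of the corollary because it follows immediately from \cref{rem:forcing_pairs} in precisely the way you describe — the forward direction applied to the known pair $(F,G)$ rules out any ``bad'' $\phi$, and since that condition mentions only $(\eta,\tau)$, the backward direction then applies to every forcing pair. Your bookkeeping remarks about isolated vertices and the identification of $\llbracket\sdot\rrbracket_{(\eta,\tau)}$ with the combinatorial subdivision are accurate and consistent with the paper's conventions.
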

\begin{proof}[Proof of \cref{thm:forcing_pairs}]
     For the $K_3$-subdivisions it is sufficient to note by \cref{cor:forcing_pairs} that both $K_2$ and $G$ as well as $K_3$ and the $K_3$-subdivision of $G$ are forcing pairs for any forcing $G \in \gc$~\cite{reiher2019forcing}. Likewise, for the $P_k$-subdivision we can note that $(C_{2t}, C_{2s})$ and $(C_{2kt}, C_{2ks})$ are forcing pairs for any $s \ne t$~\cite{chung1989quasi}.
\end{proof}

\section{Hypergraphs - Proof of \cref{thm:hypergraphs}} \label{sec:proof_hypergraph}

We will prove the following stronger statement than \cref{thm:hypergraphs} that mixes both the loose and even constructions. Let $r$ be a uniformity, $m$ a natural number and $S'$ any set so that $r=2m+|S'|$. Denote by $F_e \in \gxx{[2] \times [m]\sqcup S'}{r}$ the edge and $S_1 = \{1\}\times [m]$ and $S_2 = \{2\}\times [m]$.

\begin{theorem}
    If $G \in \gcxx{2}$ is  Sidorenko, then so is $\sub(I^{(r)},F_e,S_1,S_2;G) \in \gcxx{r}$.
\end{theorem}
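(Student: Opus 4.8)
The plan is to follow the strategy of the proof of \cref{thm:gensubdivision}, now in the $r$-uniform setting: push the trivially true Sidorenko inequality $\nind(G) \ge K_2^{e_G}$ through the order-preserving algebra operator attached to the subdivision data. First I would reduce to the case that $G$ has no isolated vertices, so that \cref{lem:tau_from_eta} applies (deleting an isolated vertex affects neither whether $G$ is Sidorenko nor whether $\sub(I^{(r)}, F_e, S_1, S_2; G)$ is Sidorenko, the latter because $F_v = I^{(r)}$ is edgeless and each isolated vertex of $G$ therefore only contributes $m$ isolated vertices to the subdivision); I would also note that we may assume $m \ge 1$, the case $m = 0$ being immediate. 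Next I would record that, being the single $r$-edge, $F_e$ is symmetric with respect to $S_1$ and $S_2$ and has $E_{F_e[S_1]} = E_{F_e[S_2]} = \emptyset$ (since $|S_1| = |S_2| = m < 2m + |S'| = r$), so that both the subdivision and the operator furnished by \cref{lem:tau_from_eta} are defined. Finally, since $G$ is Sidorenko, \cref{rem:sidorenko} gives $\nind(G) \ge K_2^{e_G}$ in $\gaxx{2}$.

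The main step is then to take $\eta = (\id \times \const_{[m]}) \sqcup (\eta^{(2)} \times \const_{S'})$ together with the $\eta$-upward transformation $\tau$ supplied by \cref{lem:tau_from_eta} for the data $(I^{(r)}, F_e, S_1, S_2)$ with $r' = 2$, so that $\llbracket \sdot \rrbracket_{(\eta,\tau)} : \gaxx{2} \to \gaxx{r}$ is order-preserving by \cref{thm:order_preserving_operator} and multiplicative by \cref{rmk:multiplicative} (exactly as for the operator used in the proof of \cref{thm:gensubdivision}). Writing $\sub(G) = \sub(I^{(r)}, F_e, S_1, S_2; G)$ and applying this operator to the inequality above gives
\begin{align*}
\nind(\sub(G)) &\overset{\eqref{eq:swap_operators}}{=} \llbracket \nind(G) \rrbracket_{(\eta,\tau)} \overset{\ref{thm:order_preserving_operator}}{\ge} \llbracket K_2^{e_G} \rrbracket_{(\eta,\tau)} \overset{\ref{rmk:multiplicative}}{=} \llbracket \nind(K_2) \rrbracket_{(\eta,\tau)}^{e_G} \\
&\overset{\eqref{eq:swap_operators}}{=} \nind(\sub(K_2))^{e_G} = (K_r^{(r)})^{e_G},
\end{align*}
where I use $\nind(K_2) = K_2$, the fact that $\sub(K_2)$ is the hypergraph obtained by replacing the two vertices of $K_2$ with edgeless copies of $I^{(r)}$ and its edge with a copy of $F_e$, i.e., a single $r$-edge on $2m + |S'| = r$ vertices, so $\sub(K_2) = K_r^{(r)}$, together with $\nind(K_r^{(r)}) = K_r^{(r)}$. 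Since $e_{F_e} = 1$ and $E_{F_e[S_1]} = E_{F_e[S_2]} = \emptyset$, distinct edge-copies in $\sub(G)$ never share a hyperedge, so $e_{\sub(G)} = e_G$, and the displayed inequality is then precisely the Sidorenko inequality for $\sub(G)$ by the $r$-uniform analogue of \cref{rem:sidorenko}; I would conclude that $\sub(G)$ is Sidorenko. Taking $m = 1$, $S' = [r-2]$ gives the loose hypergraph (up to isolated vertices) and $S' = \emptyset$, $m = r/2$ the even one, which together establish \cref{thm:hypergraphs}.

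The conceptual content sits entirely in \cref{lem:tau_from_eta} and \cref{thm:order_preserving_operator}, which are already available, so what remains is bookkeeping. The point I expect to demand the most care is the identification $\llbracket \nind(K_2) \rrbracket_{(\eta,\tau)} = K_r^{(r)}$ together with the edge count $e_{\sub(G)} = e_G$, both hinging on $F_e$ being a single edge whose restrictions to $S_1$ and $S_2$ are empty (so that distinct edge-copies never overlap in a hyperedge), and, relatedly, the appeal to \cref{rmk:multiplicative} for this particular $\eta$, which is the same invocation as in the proof of \cref{thm:gensubdivision} (an alternative that avoids it is to apply \cref{lem:tau_from_eta} directly to $\nind$ of the perfect matching $e_G \cdot K_2$ and to use that $\nind$ of a disjoint union is the product of the factors' $\nind$'s). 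A second, minor point is the appeal to the $r$-uniform form of \cref{rem:sidorenko}, which the paper states only for $r = 2$ but which transfers verbatim since $\nind$, $\inj$, blow-ups, and the partial order are all set up uniformly in $r$.
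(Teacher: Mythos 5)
Your proposal is correct and follows essentially the same route as the paper: the same downward functor $\eta = (\id\times\const_{[m]})\sqcup(\eta^{(2)}\times\const_{S'})$, the same $\tau$ from \cref{lem:tau_from_eta}, and the identical chain $\nind(\sub(G)) = \llbracket \nind(G)\rrbracket_{(\eta,\tau)} \ge \llbracket \nind(K_2)\rrbracket_{(\eta,\tau)}^{e_G} = \nind(K_r^{(r)})^{e_G}$. Your additional care about isolated vertices (needed since \cref{lem:tau_from_eta} is stated for $G$ without them) and about the $r$-uniform form of \cref{rem:sidorenko} are correct refinements of details the paper leaves implicit.
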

\begin{proof}[Proof of \cref{thm:hypergraphs}]
    We let the downward functor be given by
    \begin{equation*}
        \eta = (\id\times \const_{[m]}) \sqcup (\eta^{(2)} \times \const_{S'})
    \end{equation*}
    and let $\tau$ be given just as in \cref{lem:tau_from_eta}.
    It follows that
    \begin{align*}
        \nind(\sub(I^{(r)},F_e,S_1,S_2;G)) & \overset{\eqref{eq:swap_operators}}{=}  \llbracket \nind(G)\rrbracket_{(\eta,\tau)} \overset{\ref{thm:order_preserving_operator}}{\ge} \llbracket \nind(K_2)^{e_G} \rrbracket_{(\eta,\tau)}\overset{\ref{rmk:multiplicative}}{=} \llbracket \nind(K_2)\rrbracket_{(\eta,\tau)}^{e_G} \overset{\eqref{eq:swap_operators}}{=}  \nind (K_r^{(r)})^{e_G}
    \end{align*}
    and therefore $\sub(I^{(r)},F_e,S_1,S_2;G)$ is Sidorenko by \cref{rem:sidorenko}.
\end{proof}

\section{Concluding Remarks}\label{sec:concluding}

In this paper we have established that concrete blow-ups, subdivisions and cartesian product involving Sidorenko graphs are again Sidorenko or even forcing.
This particularly identifies new families of graphs that are forcing, including cubes and balanced blow-ups of Sidorenko graphs.
Our method is inspired by Razborov's flag algebra calculus and builds on order preserving homomorphisms between certain graph algebras.

\medskip

Naturally, we also looked into the smallest graph not known to be Sidorenko, which is the Möbius ladder $M_5$, best described as a $K_{5,5}$ with a $C_{10}$ removed.
Unfortunately it is neither covered by our results, nor were we successful at coming up with an argument based on our method specifically tailored towards this graph.
Note that $M_5$ can be described as a subdivision of $C_5$, namely as $\sub(K_2, F_e, S_1, S_2, C_5)$ where $F_e \in \g{[2] \times [2]}$ with
\begin{equation*}
    E_{F_e} = \big\{\{(1,1),(2,2)\},\{(2,1),(1,2)\}\big\}
\end{equation*}
as well as $S_1 = \big( (1,1),(2,1) \big)$ and $S_2 = \big( (1,2),(2,2) \big)$. Note that the edges in $F_e$ cross, which combined with the fact that $C_5$ is an odd cycle gives the Möbius structure.

Obviously $C_5$ is not Sidorenko though, but the number of $C_5$ in relation to the number of edges has been investigated.
Bennett, Dudek, Lidický, and Pikhurko~\cite{Bennett_Dudek_Lidický_Pikhurko_2020} show that the minimum number of labeled five cycles in a graph with given edge density $p = 1 - 1 / k$ is
\begin{equation*}
     f(p) = 4 p^4 - 6 p^3 + 4 p^2 - p = 10 \left(\frac{1}{10} -\frac{1}{2k} + \frac{1}{k^2} - \frac{1}{k^3} + \frac{2}{5k^4}\right).
\end{equation*}
Briefly assuming this to be a continuous lower bound that holds for arbitrary $0 \le p \le 1$, which is \textit{not} conjectured to be true~\cite{Bennett_Dudek_Lidický_Pikhurko_2020}, it can be stated as $\nind (C_5) \ge f(K_2)$. Choosing $\eta$ and $\tau$ through \cref{lem:tau_from_eta_special} and using \cref{thm:order_preserving_operator} and \cref{prop:localization} as in the proof of \cref{thm:box_rephrased}, we get
\begin{equation*}
    \nind (M_5) \ge 4K_2^{13} - 6 K_2^{11} + 4 K_2^{9} - K_2^{7}.
\end{equation*}
While falling short of establishing that $M_5$ is Sidorenko, when $K_2 \ge 0.74142$ this beats the (to our knowledge) previous best lower bound of $\nind (M_5) \ge K_2^{17}$, which can ne obtained by adding two edges that make one vertex complete to the other side, see~\cite[Corollary 1.1]{conlon2010approximate}.


\end{document}